\newtheorem{theorem}{Theorem}[section]
\newtheorem{proposition}[theorem]{Proposition}
\newtheorem{corollary}[theorem]{Corollary}
\newtheorem{conjecture}[theorem]{Conjecture}
\newtheorem{lemma}[theorem]{Lemma}
\newtheorem{definition}[theorem]{Definition}
\newtheorem{example}[theorem]{Example}
\newtheorem{remark}[theorem]{Remark}
\DeclareMathOperator{\overlap}{overlap}
\DeclareMathOperator{\RunBorder}{R} 
\DeclareMathOperator{\minBorder}{minRun} 
\DeclareMathOperator{\reduce}{rdc}
\DeclareMathOperator{\Alpha}{A}
\DeclareMathOperator{\Factor}{Fac}
\DeclareMathOperator{\PowFactor}{PowFac} 
\DeclareMathOperator{\PL}{PL} 
\DeclareMathOperator{\maxPL}{maxPL}
\DeclareMathOperator{\Prefix}{Pref}
\DeclareMathOperator{\Suffix}{Suf}
\DeclareMathOperator{\Pal}{Pal}
\DeclareMathOperator{\rpo}{rpo} 
\DeclareMathOperator{\rpoDom}{D} 
\DeclareMathOperator{\StdPal}{SP} 
\DeclareMathOperator{\CnStdPal}{CSP} 
\DeclareMathOperator{\maxCSP}{maxCSP} 
\DeclareMathOperator{\PalFactrz}{PF}
\DeclareMathOperator{\PPL}{PPL} 
\DeclareMathOperator{\StdPalFactrz}{SPF}
\DeclareMathOperator{\mirror}{mirror}
\DeclareMathOperator{\factrz}{factrz}
\begin{document}

\title{Palindromic Length and Reduction of Powers}

\author{Josef Rukavicka\thanks{Department of Mathematics,
Faculty of Nuclear Sciences and Physical Engineering, Czech Technical University in Prague
(josef.rukavicka@seznam.cz).}}

\date{\small{March 26, 2021}\\
   \small Mathematics Subject Classification: 68R15}

\maketitle

\begin{abstract}
Given a nonempty finite word $v$, let $\PL(v)$ be the palindromic length of $v$; it means the minimal number of palindromes whose concatenation is equal to $v$. Let $v^R$ denote the reversal of $v$. 
Given a finite or infinite word $y$, let $\Factor(y)$ denote the set of all finite factors of $y$ and let $\maxPL(y)=\max\{\PL(t)\mid t\in\Factor(y)\}$.

Let $x$ be an infinite non-ultimately periodic word with $\maxPL(x)=k<\infty$ and let $u\in\Factor(x)$ be a primitive nonempty factor such that $u^5$ is recurrent in $x$. Let $\Psi(x,u)=\{t\in\Factor(x)\mid u,u^R\not\in\Factor(t)\}\mbox{.}$

We construct an infinite non-ultimately periodic word $\overline x$ such that $u^5, (u^R)^5\not\in\Factor(\overline x)$, $\Psi(x,u)\subseteq\Factor(\overline x)$, and $\maxPL(\overline x)\leq 3k^3$.

Less formally said, we show how to reduce the powers of $u$ and $u^R$ in $x$ in such a way that the palindromic length remains bounded.
\end{abstract}

\section{Introduction}
Recall that if $v=a_1a_2\cdots a_n$ is a word of length $n$, where $a_i$ are letters and $i\in \{1,2,\dots,n\}$, then the word $v^R=a_na_{n-1}\cdots a_1$ is called the \emph{reversal} of $v$. We have that $v$ is a \emph{palindrome} if $v=v^R$. The \emph{palindromic length} $\PL(v)$ of the word $v$ is equal to the minimal number $k$ such that $v=v_1v_2\dots v_k$ and $v_j$ are nonempty palindromes, where $j\in \{1,2,\dots,k\}$.
In 2013, Frid, Puzynina, Zamboni conjectured that \cite{FrPuZa}:
\begin{conjecture}
\label{tue8eiru883}
If $w$ is an infinite word and $P$ is an integer such that $\PL(u)\leq P$ for every factor $u$ of $w$ then $w$ is ultimately periodic. 
\end{conjecture}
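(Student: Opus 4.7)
The statement is the well-known open conjecture of Frid, Puzynina, and Zamboni, so any proof plan is necessarily speculative; the one below is the line of attack most naturally suggested by the paper's reduction framework. I would proceed by contradiction: assume $w$ is an infinite non-ultimately periodic word with $\PL(u)\leq P$ for every factor $u$, i.e.\ $\maxPL(w)=k\leq P$. The strategy is to iteratively simplify $w$ by eliminating high powers of primitive factors while preserving both non-ultimate periodicity and finite palindromic length, until the surviving combinatorial structure is so constrained that non-ultimate periodicity becomes impossible.

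Concretely, enumerate the primitive factors $u_1,u_2,\ldots$ of $w$ whose fifth powers are recurrent. Setting $w^{(0)}:=w$, iteratively apply the power-reduction construction described in this paper: at stage $n$, produce $w^{(n)}$ from $w^{(n-1)}$ with $u_n^5$ and $(u_n^R)^5$ eliminated, $\Psi(w^{(n-1)},u_n)\subseteq\Factor(w^{(n)})$, non-ultimately periodic, and $\maxPL(w^{(n)})\leq 3\maxPL(w^{(n-1)})^3$. Pass to a limit point $w^{(\infty)}$ in a suitable compactness framework (subshift closure followed by extraction of a uniformly recurrent element) that avoids fifth powers of every recurrent primitive factor, is non-ultimately periodic, and --- crucially --- still has finite palindromic length. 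Finally, invoke a separate combinatorial lemma asserting that any infinite non-ultimately periodic word that avoids the fifth power of every recurrent primitive factor must have \emph{unbounded} palindromic length; this would yield the desired contradiction.

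The main obstacle is the blow-up of the palindromic-length bound under iteration: the quantitative step $k\mapsto 3k^3$ becomes a tower growing doubly exponentially in the number of stages, so the limit word does not automatically inherit finite $\maxPL$ from the approximating sequence. Surmounting this would require either a sharply improved reduction with only, say, linear dependence on $k$, or a simultaneous-removal variant that handles all high powers in a single step with a fixed blow-up, or a bounded-stage argument showing that a contradiction already arises after finitely many stages depending only on $P$. Each route is itself a substantial problem, which is precisely where the Frid--Puzynina--Zamboni conjecture has resisted proof; the present paper supplies one of the technical ingredients by establishing that a single power-reduction step keeps $\maxPL$ at least finite, but not yet the quantitative control needed to close the iteration.
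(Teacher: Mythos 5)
This statement is the Frid--Puzynina--Zamboni conjecture, which the paper explicitly leaves open (``in general, the conjecture remains open''); the paper contains no proof of it, so there is nothing to compare your argument against except the paper's own remarks on why the obvious attack fails. Your proposal is honest that it is a research program rather than a proof, and as such it contains a genuine, acknowledged gap: the iteration does not close. You correctly identify the two fatal points. First, the quantitative step $k\mapsto 3k^3$ of Theorem \ref{d7uehgmn} compounds under iteration, and the paper itself observes (citing the failure of the $(k,l)$-condition from \cite{FrPuZa}) that the set of primitive $u$ with $u^k\in\Factor(x)$ is infinite for every $k$, so infinitely many stages are unavoidable and the limit word cannot be shown to inherit a finite bound on $\maxPL$. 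Second, your ``limit point in a suitable compactness framework'' is not justified: a subshift limit of the $w^{(n)}$ need not preserve non-ultimate periodicity, need not avoid the relevant powers simultaneously, and above all has no a priori bound on the palindromic length of its factors, which is exactly the quantity one is trying to control. The ``separate combinatorial lemma'' you invoke at the end is essentially the known result of \cite{FrPuZa} that the conjecture holds for $k$-power-free words, so that ingredient is available; everything else in the chain is not.

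In short, your proposal reconstructs the motivation the author gives in the introduction for proving Theorem \ref{d7uehgmn} (``if we could construct an infinite $k$-power free word $\overline x$ with $\maxPL(\overline x)<\infty$ from $x$ by reducing the power factors, we could prove Conjecture \ref{tue8eiru883}''), together with the author's own explanation of why this does not yet yield a proof. It should not be presented as a proof of the conjecture; at most it is a correct account of one open route toward it, with the decisive steps (a reduction with controlled blow-up, or a single-step simultaneous removal, or a bounded-stage contradiction) still missing.
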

Conjecture \ref{tue8eiru883} has been solved for some classes of words. Most notably it is known that if $w$ is an infinite $k$-power free word for some positive integer $k$, then the conjecture holds for $w$ \cite{FrPuZa}. Also, for example, the conjecture has been confirmed for Sturmian words \cite{FRID2018202}. However, in general, the conjecture remains open.

In \cite{10.1007/978-3-319-66396-8_19}, another version of Conjecture \ref{tue8eiru883} was presented:
\begin{conjecture}
\label{di88ejdui33}
Every non-ultimately periodic infinite word has prefixes of arbitrarily high palindromic length.
\end{conjecture}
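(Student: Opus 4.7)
I would attempt Conjecture~\ref{di88ejdui33} by contradiction and by leveraging the main construction announced in the abstract. First I would argue that the prefix version and the factor version (Conjectures~\ref{di88ejdui33} and~\ref{tue8eiru883}) are equivalent up to a change of the bound, so that it suffices to derive a contradiction from the assumption $\maxPL(x)=k<\infty$ for a non-ultimately periodic $x$.

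The plan then is to whittle $x$ down to a fifth-power-free word, since Conjecture~\ref{tue8eiru883} is already known to hold for $k$-power-free words by Frid, Puzynina and Zamboni~\cite{FrPuZa}. If no primitive $u\in\Factor(x)$ has $u^5$ recurrent in $x$, then every fifth power of a primitive factor of $x$ occurs only finitely often, and a small amount of additional work (passing to appropriate tails) should reduce to the power-free case. Otherwise, pick such a $u$ and apply the paper's main construction to obtain a non-ultimately periodic $\overline x$ with $u^5,(u^R)^5\not\in\Factor(\overline x)$, $\Psi(x,u)\subseteq\Factor(\overline x)$, and $\maxPL(\overline x)\leq 3k^3$. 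Then iterate, producing a sequence $x=x^{(0)},x^{(1)},x^{(2)},\ldots$, each obtained from the previous by removing the recurrent fifth power of a chosen primitive factor $u_i$, until the resulting word has no recurrent fifth power of any primitive factor.

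The main obstacle is controlling this iteration, and I expect two difficulties to dominate. First, the palindromic-length bound cubes at each step ($k\mapsto 3k^3$), so a naive limit gives no uniform bound on $\maxPL$; one needs either an inductive invariant or a strengthened reduction that simultaneously eliminates many primitive factors at once with a better dependency. Second, the construction only guarantees $\Psi(x,u)\subseteq\Factor(\overline x)$, not $\Factor(\overline x)\subseteq\Factor(x)$, so $\overline x$ may contain new factors whose fifth powers are recurrent even though those powers did not appear in $x$; the enumeration of ``bad'' factors need not terminate. To circumvent both issues, I would try to arrange that $x^{(i)}$ avoids $v^5$ for every primitive $v$ of length at most some threshold $L_i\to\infty$, then extract from $\{x^{(i)}\}$, in the product topology on infinite words, a limit word $x^{(\infty)}$ that is non-ultimately periodic, fifth-power-free, and of uniformly bounded palindromic length. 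Applying~\cite{FrPuZa} to $x^{(\infty)}$ would furnish the desired contradiction. This uniform bound on $\maxPL(x^{(i)})$ independent of $i$ is the hardest point; the current paper only supplies the per-step blow-up $k\mapsto 3k^3$, so closing the conjecture seems to require either a sharper version of the reduction or a fundamentally different limiting argument.
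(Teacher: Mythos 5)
The statement you are trying to prove is an open conjecture; the paper does not prove it, it only records it (citing the literature for its equivalence with Conjecture~\ref{tue8eiru883}) and then proves the much weaker Theorem~\ref{d7uehgmn} as a step in that general direction. So the question is not whether your route differs from the paper's, but whether your sketch closes the gap that the paper leaves open. It does not, and the paper itself tells you exactly why. In the introduction, right after stating Theorem~\ref{d7uehgmn}, the author notes that a word $x$ with $\maxPL(x)<\infty$ fails the $(k,l)$-condition of \cite{FrPuZa}, hence the set $\{u\in\Factor(x)\mid u \text{ primitive and } u^k\in\Factor(x)\}$ is \emph{infinite} for every $k$; combined with the fact (from \cite{BucMichGreedy2018}) that only finitely many factors are non-recurrent, your ``no recurrent fifth power'' terminating case essentially never occurs, and the iteration $x^{(0)},x^{(1)},\dots$ must run forever. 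Since each step multiplies the bound by roughly $3k^2$, the paper explicitly concludes that the limit word would have unbounded palindromic length --- which is precisely the invariant you would need to preserve in order to apply the power-free case of \cite{FrPuZa} to $x^{(\infty)}$.

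You do identify both obstructions honestly (the cubing of the bound and the lack of control over new factors of $\overline x$), but identifying them is not the same as overcoming them, and your proposed fix --- arranging that $x^{(i)}$ avoids $v^5$ for all primitive $v$ of length at most $L_i\to\infty$ and taking a limit in the product topology --- is exactly the step for which no argument is given. A subsequential limit of the $x^{(i)}$ need not be non-ultimately periodic, need not inherit any finite bound on $\maxPL$ (since the bounds $3^{3^i}k^{3^i}$ diverge), and there is no mechanism ensuring that the fifth powers eliminated at stage $i$ do not reappear at stage $i+1$, because Theorem~\ref{d7uehgmn} only guarantees $\Psi(x,u)\subseteq\Factor(\overline x)$ and says nothing about which new factors $\overline x$ acquires. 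As written, the proposal is a research program whose central difficulty coincides with the open problem itself, not a proof.
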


In \cite{10.1007/978-3-319-66396-8_19}, it was proved that Conjecture \ref{tue8eiru883} and Conjecture \ref{di88ejdui33} are equivalent.

There are quite many papers dealing with palindromic length.  In \cite{AMBROZ201974}, the authors study the palindromic length of factors of fixed points of primitive morphisms. In \cite{10.1007/978-3-030-62536-8_14}, the palindromic length of factors with many periodic palindromes is investigated. 

In addition, algorithms for computing the palindromic length were researched \cite{borozdin_et_al:LIPIcs:2017:7338}, \cite{FICI201441}, \cite{RuSh15}. In \cite{RuSh15}, the authors present a linear time online algorithm for computing the palindromic length.

Given an infinite word $w$, let $\PPL_w(n)$ be the palindromic length of the prefix of $w$ with the length $n$. In 2019, Frid conjectured that \cite{10.1007/978-3-030-24886-4_17}:
\begin{conjecture}
\label{fur7ieueifu}
If an infinite word $w$ is $k$-power free for some positive integer $k$, then $\limsup_{n\rightarrow \infty}\frac{\PPL_w(n)}{\ln{n}}>0$.
\end{conjecture}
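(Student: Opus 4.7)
The plan is to argue by contrapositive. Suppose $w$ is an infinite word with $\limsup_{n\to\infty}\PPL_w(n)/\ln n = 0$; I aim to show that $w$ must then contain arbitrarily high powers, so it fails to be $k$-power free for any positive integer $k$. Fix $\epsilon>0$ arbitrarily small. For every sufficiently large $n$, the prefix of $w$ of length $n$ admits a palindromic factorization into $m_n\leq \epsilon\ln n$ parts, so by pigeonhole one part has length at least $n/(\epsilon\ln n)$. Thus $w$ admits palindromic factors of unbounded length, and in every long prefix such a long palindrome occupies a positive fraction of the symbols.

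The second step is a counting argument. Compare the factorization $\mathcal F_n$ of the length-$n$ prefix with $\mathcal F_{2n}$ of the length-$2n$ prefix: both have only $O(\epsilon \ln n)$ parts, so the ``palindromic budget'' is severely constrained across doubling scales. Iterating this comparison along $\mathcal F_{2^j n}$ for $j=1,2,\ldots$, the same long palindrome $P$ of length $\ell \gtrsim n/\ln n$ must recur at positions of $w$ separated by some distance $d \ll \ell$. By a Fine--Wilf style argument applied to the two occurrences, $w$ contains a factor of period $d$ and length at least $\ell+d$. Aggregating over the geometric chain of scales should extend the periodic block until it contains $k$ full periods---that is, until it contains an explicit factor of the form $u^k$ in $w$.

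The main obstacle is precisely this last conversion: a period $d$ inside a factor of length $L$ yields a $k$-th power only when $L\geq kd$, and the pigeonhole estimates above give periods and palindrome lengths of comparable order, placing us at the borderline. I expect to close this gap using the reduction construction of the present paper. Iterating that construction, one strips off from $w$ each hypothetical power $u^j$ while inflating $\maxPL$ only by a controlled polynomial factor in the current bound; the limit object $\overline w$ is a non-ultimately periodic word that genuinely is $k$-power free and still has $\maxPL(\overline w)<\infty$, contradicting the known case of Conjecture~\ref{tue8eiru883} for $k$-power-free words established by Frid, Puzynina and Zamboni. The technical heart of the argument is the quantitative passage from the hypothesis $\PPL_w(n)=o(\ln n)$, which is strictly weaker than $\maxPL(w)<\infty$, to a form in which the reduction lemma can be invoked uniformly; this upgrade is, I believe, the true crux of the conjecture.
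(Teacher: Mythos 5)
This statement is Conjecture~\ref{fur7ieueifu}, an open conjecture due to Frid; the paper does not prove it, it only cites it as motivation. Your proposal therefore cannot be ``compared against the paper's proof'' --- there is none --- and, taken on its own terms, it is not a proof but a programme with several unfilled gaps, two of which you name yourself.

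The concrete problems are these. First, the pigeonhole step only yields, for each large $n$, \emph{some} palindromic factor of length at least $n/(\epsilon\ln n)$ in the length-$n$ prefix; it does not give you that the \emph{same} palindrome $P$ recurs at two positions separated by a distance $d\ll\ell$, which is what your Fine--Wilf step needs. Comparing $\mathcal F_n$ with $\mathcal F_{2n}$ constrains the \emph{number} of parts, not their identities or positions, so the claimed recurrence of a fixed long palindrome does not follow. Second, and more seriously, the closing move --- iterating the reduction construction of this paper to produce a $k$-power-free word $\overline w$ with $\maxPL(\overline w)<\infty$ --- is explicitly ruled out in the paper's own introduction: since a word with bounded palindromic length fails the $(k,l)$-condition, the set of primitive $u$ with $u^k\in\Factor(x)$ is infinite for every $k$, so removing all high powers requires infinitely many applications of Theorem~\ref{d7uehgmn}, and each application multiplies the bound ($k\mapsto 3k^3$), so the limit object has unbounded palindromic length. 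Finally, Theorem~\ref{d7uehgmn} requires the hypothesis $\maxPL(x)\leq k$, which is strictly stronger than $\PPL_w(n)=o(\ln n)$; you acknowledge that bridging this is ``the true crux,'' which is exactly the part that remains unproved. In short: the approach as stated would not close, and the statement remains a conjecture.
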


Let $\mathbb{N}_1$ denote the set of all positive integers and let $\mathbb{R}$ denote the set of all real numbers. It is quite easy to see that if $\varphi(n):\mathbb{N}_1\rightarrow\mathbb{R}$ is a function with $\lim_{n\rightarrow\infty}\varphi(n)=\infty$ and $\varphi(n)<\varphi(n+1)$ for all $n\in\mathbb{N}_1$ (so that the inverse function $\varphi^{-1}$ exists), then there is an infinite non-ultimately periodic word $w$, such that $\limsup_{n\rightarrow\infty}\frac{\PPL_w(n)}{\varphi(n)}\leq 1$. 
To see this, consider the alphabet $\Alpha=\{0,1\}$. Let $w=0^{\lceil\varphi^{-1}(2)\rceil}10^{\lceil\varphi^{-1}(4)\rceil}10^{\lceil\varphi^{-1}(6)\rceil}\cdots$. It is straightforward to prove that $\limsup_{n\rightarrow\infty}\frac{\PPL_w(n)}{\varphi(n)}\leq 1$. It means that the palindromic length of prefixes can grow arbitrarily slow. Note that for every $k\in\mathbb{N}_1$, we have that $0^k$ is a factor of $w$. Thus there is no $k$ such that $w$ is $k$-power free.

Based on this observation and Conjecture \ref{fur7ieueifu}, we could say that the power factors in $w$ allow us to restrict the growth rate of palindromic length of prefixes $\PPL_w(n)$. In the current paper we investigate the relation between the palindromic length and the presence of power factors in an infinite words.  More specifically, we reduce the powers of a given recurrent factor in an infinite word in such a way that the palindromic length does not ``significantly'' change. The basic idea of our powers reduction is as follows: For a given factor $u$ of an infinite word $w$, we replace every factor $u^j$ with $j\in\mathbb{N}_1$ by a factor $u^{\phi(j)}$, where $\phi:\mathbb{N}_1\rightarrow \mathbb{N}_1$ is a function such that $\phi(u)<h$ for some constant $h\in\mathbb{N}_1$. 

One of the motivations for this investigation is this idea: Given an infinite non-ultimately periodic word $x$ with $\maxPL(x)<\infty$, if we could construct an infinite $k$-power free word $\overline x$ with $\maxPL(\overline x)<\infty$ from $x$ by reducing the power factors, we could prove Conjecture \ref{tue8eiru883}.

Given a finite or infinite word $y$, let $\Factor(y)$ denote the set of all finite factors of $y$ and let $\maxPL(y)=\max\{\PL(t)\mid t\in\Factor(y)\}$.
Given an infinite word $x$ and a finite nonempty word $u$, 
let $\Psi(x,u)=\{t\in\Factor(x)\mid u,u^R\not\in\Factor(t)\}\mbox{.}$ 

The main result of the current article is the following theorem.
\begin{theorem}
\label{d7uehgmn}
If $x$ is an infinite non-ultimately periodic word, $u\in\Factor(x)$, $\vert u\vert\geq 1$, $u$ is primitive, $u^5$ is recurrent in $w$, $k\in\mathbb{N}_1$, and $\maxPL(x)\leq k$ then 
there is an infinite non-ultimately periodic word $\overline x$ such that $u^5, (u^R)^5\not\in\Factor(\overline x)$, $\Psi(x,u)\subseteq \Factor(\overline x)$, and $\maxPL(\overline x)\leq 3k^3$.
\end{theorem}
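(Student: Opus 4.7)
The plan is to build $\overline x$ by a left-to-right scan of $x$ that replaces every maximal factor of the form $u^j$ (resp.\ $(u^R)^j$) with $j\ge 5$ by $u^{\phi(j)}$ (resp.\ $(u^R)^{\phi(j)}$) for a function $\phi:\mathbb{N}_1\to\{1,2,3,4\}$ satisfying $\phi(j)=j$ on $\{1,2,3,4\}$ and $\phi(j)\in\{3,4\}$ for $j\ge 5$. The value $\phi(j)$ for $j\ge 5$ must carry enough information about $j$ (for example, one bit depending on the parity of $j$, or an injective encoding of some suitably chosen invariant) so that an eventual period of $\overline x$ would pull back to an eventual period of $x$. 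By construction $u^5,(u^R)^5\notin\Factor(\overline x)$, and because the material of $x$ lying outside the $u$- and $u^R$-runs is copied verbatim into $\overline x$, we have $\Psi(x,u)\subseteq\Factor(\overline x)$; non-ultimate periodicity of $\overline x$ then follows by contradiction, combining the information $\phi$ preserves with the recurrence of $u^5$ in $x$.

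For the palindromic length bound, fix $t\in\Factor(\overline x)$ together with a chosen occurrence of $t$ in $\overline x$, and strip off the (at most two) partial reduced runs at its endpoints, writing $t=\alpha\,t_0\,\beta$ with $|\alpha|,|\beta|<4|u|$ and $t_0$ aligned at reduced-run boundaries. Since $\alpha,\beta$ are factors of $u^{\le 4}$ or $(u^R)^{\le 4}$ and $\PL(u)\le k$ (as $u\in\Factor(x)$), we get $\PL(\alpha),\PL(\beta)\le 4k$. The aligned piece $t_0$ lifts canonically to a factor $t_0^\ast\in\Factor(x)$ by inflating every full reduced run back to its original length, and the hypothesis $\maxPL(x)\le k$ yields a palindromic factorization $t_0^\ast=p_1\cdots p_m$ with $m\le k$. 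Reducing each $p_i$ back to its image $\bar p_i$ gives $t_0=\bar p_1\cdots\bar p_m$, so the task reduces to proving $\PL(\bar p_i)\le 3k^2$, which together with the end corrections yields $\PL(t)\le 3k^3$.

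The heart of the proof is the bound $\PL(\bar p)\le 3k^2$ for an arbitrary palindrome $p\in\Factor(x)$. Because $p$ is a palindrome, every interior maximal $u^j$-run of $p$ pairs with a mirrored $(u^R)^j$-run of equal length; applying $\phi$ symmetrically to both members of each such pair preserves the palindromic skeleton. The only asymmetry comes from the at most two boundary runs of $p$, which may be truncated by the endpoints of $p$ relative to the original run in $x$. Decomposing $\bar p$ around these boundary runs and using $\PL(u^{\le 4})\le 4k$, together with the fact that the intervening pure factors lie in $\Factor(x)$ and hence have $\PL\le k$, one bounds $\PL(\bar p)$ by a linear function of the number of maximal $u$- and $u^R$-runs inside $p$; the remaining combinatorial task is to bound this run count by $O(k)$ using the palindromicity of $p$ and $\maxPL(x)\le k$.

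The main obstacle is precisely this last $O(k^2)$ bound on $\PL(\bar p)$, and in particular controlling how the truncated boundary runs of a palindromic $p\in\Factor(x)$ interact with the $\phi$-reduction. I expect this to require a careful alignment convention for the lift (for instance, identifying the center of each reduced run with the center of the original run) so that the palindromic symmetry of $p$ is not destroyed when $p$ straddles a long $u^j$-run, together with a counting lemma that bounds the number of distinct maximal $u$- and $u^R$-run occurrences inside any palindromic factor of $x$ by a linear function of $k$. Once these two technical ingredients are established, the remaining palindromic bookkeeping is routine and the constants combine to yield the announced bound $3k^3$.
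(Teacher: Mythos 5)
Your overall architecture matches the paper's: reduce the exponent of each maximal $u$/$u^R$-run by a bounded-range function $\phi$, choose $\phi$ so that an eventual period of the reduced word would pull back to one of $x$, lift a factor of the reduced word to $x$, take a palindromic factorization of the lift into at most $k$ pieces, and exploit the mirror symmetry of runs inside each palindromic piece. But the step you yourself flag as the heart of the argument contains a genuine gap, and the specific lemma you propose to close it is false. There is no bound, linear in $k$ or otherwise, on the number of maximal $u$- and $u^R$-runs inside a palindromic factor of $x$: if $u$ and $v$ are palindromes, then $(u^5v)^mu^5$ is a single palindrome (palindromic length $1$) containing $m+1$ maximal $u$-runs, for every $m$, and such words are compatible with $\maxPL(x)\leq k$. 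So any route that bounds $\PL(\bar p)$ by a linear function of the run count of $p$ and then tries to bound that count cannot work.

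The paper's proof succeeds precisely because it never counts runs inside the symmetric part of a palindrome. For each palindromic piece $p$ of the factorization it isolates a maximal \emph{centered standard palindrome} (Lemma \ref{ufj77cbdjh22}), i.e.\ a centered sub-palindrome whose run pattern is mirror-symmetric and whose endpoints are clear of all runs; Proposition \ref{jt77gjri29k} (via Lemma \ref{fyue7dyiiu7d} and Proposition \ref{vnm87sm26g}) shows its image under the reduction is again a \emph{single} palindrome, contributing $1$ to the palindromic length no matter how many runs it contains. The two flanking pieces between the border of $p$ and that centered standard palindrome contain at most one run each (Proposition \ref{ryy8hu8cxvxc}), and a palindrome with no centered standard palindrome at all contains at most two runs (Lemma \ref{uj827xzv3}); only these boundedly-many-run leftovers pay the multiplicative $O(k)$ cost of Proposition \ref{kkm3bvs56}. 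Your second paragraph actually contains the right germ (``applying $\phi$ symmetrically \dots preserves the palindromic skeleton''), but you then abandon it in favor of run counting instead of concluding that the symmetric core reduces to one palindrome. A secondary issue you would also need to resolve is that the breakpoints of the palindromic factorization of the lift can fall in the middle of a run, so ``reducing each $p_i$ to $\bar p_i$'' is not well defined piecewise; the paper avoids this by re-anchoring the factorization at the endpoints of the standard palindromes, which lie in $\rpoDom_{w,u}$ by construction.
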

\begin{remark}
Theorem \ref{d7uehgmn} considers infinite non-ultimately periodic words with a bounded palindromic length. Note that according to Conjecture \ref{tue8eiru883}, such words do not exist.
\end{remark}

Let $x$ be an infinite non-ultimately periodic word with $\maxPL(x)<\infty$.

In \cite{BucMichGreedy2018}, it was shown that $x$ has an infinite suffix $\widetilde x$ such that $\widetilde x$ has infinitely many periodic prefixes. It follows that $x$ has only finitely many non-recurrent factors. This result justifies that we consider that $u$ is recurrent.

In \cite{FrPuZa}, it was shown that $x$ does not satisfy the so-called $(k,l)$-condition. It follows that the set $\{u\in\Factor(x)\mid u\mbox{ is primitive and }u^k\in\Factor(x)\}$ is infinite for every $k\in\mathbb{N}_1$. Hence an iterative application of Theorem \ref{d7uehgmn} to construct a $k$-power free non-ultimately periodic word would require an infinite number of iterations. In consequence the constructed word would have an unbounded palindromic length of its factors.

\section{Preliminaries}
Let $\Alpha$ denote a finite alphabet. Given $n\in\mathbb{N}_1$, let $\Alpha^n$ denote the set of all finite words of length $n$ over the alphabet $\Alpha$, let $\epsilon$ denote the empty word, let $\Alpha^+=\bigcup_{i\geq 1}\Alpha^i$, and let $\Alpha^*=\Alpha^+\cup\{\epsilon\}$. Let $\Alpha^{\infty}$ denote the set of all infinite words over the alphabet $\Alpha$; i.e. $\Alpha^{\infty}=\{a_1a_2\dots\mid a_i\in\Alpha\mbox{ and }i\in\mathbb{N}_1\}$.

A word $x\in\Alpha^{\infty}$ is called \emph{ultimately periodic}, if there are $x_1,x_2\in\Alpha^+$ such that $x=x_1x_2^{\infty}$. If there are no such $x_1,x_2$ then $x$ is called \emph{non-ultimately periodic}.

Given a word $v\in\Alpha^*\cup\Alpha^{\infty}$, let $\Factor(v)$ denote the set of all finite factors of $v$ including the empty word. If $v$ is finite then $v\in\Factor(v)$. 

Given a word $v\in\Alpha^*$, let $\Prefix(v)$ and $\Suffix(v)$ denote the set of all prefixes and suffixes of $v$, respectively. We have that $\{\epsilon,v\}\subseteq \Prefix(v)\cap\Suffix(v)$.

Given a word $v\in\Alpha^{\infty}$, let $\Prefix(w)$ denote the set of all finite prefixes of $v$ including the empty word.

Let $\mathbb{N}_0$ denote the set of all nonnegative integers. Let $\mathbb{Q}_1$ denote the set of all rational numbers bigger or equal to $1$. 

Let $q\in\mathbb{Q}_1$. A word $v$ is a $q$-power of the word $u$ if $v=u^{\lfloor q\rfloor}\overline u$, where $\overline u\in\Prefix(u)\setminus\{u\}$ and $\vert v\vert=q\vert u\vert$. We write that $v=u^q$.

A word $v\in\Alpha^+$ is called \emph{primitive} if there are no $u\in\Alpha^+$ and $q\geq 2$ such that $v=u^q$.

Let $\Pal\subseteq\Alpha^{+}$ be the set of all palindromes.

Given $w\in\Alpha^{\infty}$ and $u\in\Factor(w)\setminus\{\epsilon\}$, we say that $u$ is \emph{recurrent} in $w$ if $\vert \{t\in\Prefix(w)\mid u\in\Suffix(t)\}\vert =\infty$.  It means that $u$ has infinitely many occurrences in $w$.

Let $w\in\Alpha^{\infty}$ and $w=a_1a_2\dots$, where $a_i\in\Alpha$ and $i\in\mathbb{N}_1$. We denote  by $w[i,j]$ and $w[i]$ the factors $a_ia_{i+1}\dots a_j$ and $a_i$, respectively, where $i\leq j\in\mathbb{N}_1$. 

\section{Non-ultimately periodic words}

Given $u\in\Alpha^+$, let $\PowFactor(u)=\Factor(u^{\infty})\cup\Factor((u^R)^{\infty})$.
Fix $\gamma\in\mathbb{N}_1$ with $\gamma\geq 3$. Given $w\in\Alpha^{\infty}$, let \[\begin{split}\Pi_{\gamma}(w)=\{u\in\Factor(w)\cap\Alpha^+\mid 
\Prefix(w)\cap\PowFactor(u)\cap \Alpha^{\gamma\vert u\vert}=\emptyset\mbox{ and }\\ u\mbox{ is primitive and there is }v\in\PowFactor(u)\cap\Alpha^{\vert u\vert}\\ \mbox{ such that $v^{\gamma}$ is recurrent in }w\}\mbox{.}\end{split}\]
\begin{remark}
The condition $\Prefix(w)\cap\PowFactor(u)\cap \Alpha^{\gamma\vert u\vert}=\emptyset$ guarantees the power $v^{\gamma}$ is not a prefix of $w$. This is just for our conveniance, when definining the factorization of $w$ in next sections. Since $w$ non-ultimately periodic, there are infinitely many suffixes of $w$ without the prefix $v^{\gamma}$. We apply this obervation in the proof of Theorem \ref{d7uehgmn}.
\end{remark}
Let $w\in\Alpha^{\infty}$ be non-ultimately periodic and let $u\in\Pi_{\gamma}(w)$.

Let \[\begin{split}\RunBorder_{w,u}=\{(i,j)\mid i<j\in\mathbb{N}_1\mbox{ and }j-i+1\geq (\gamma-2)\vert u\vert \mbox{ and }\vert u\vert+1<i \mbox{ and }\\ w[i-\vert u\vert,j+\vert u\vert]\in\PowFactor(u) \mbox{ and }\\
w[i-\vert u\vert,j+\vert u\vert+1]\not\in\PowFactor(u) \mbox{ and }\\ w[i-\vert u\vert-1,j+\vert u\vert]\not\in\PowFactor(u)
\}\mbox{.}\end{split}\]
We call an element $(i,j)\in\RunBorder_{w,u}$ a $u$-\emph{run} or simply a \emph{run}. The term ``run'' has been used also in \cite{FrPuZa}. Our definition of a run is slightly different. Our definition guarantees that runs do not overlap with each other, as shown in the next lemma. In addition, note that if $(i,j)\in\RunBorder_{w,u}$ is a $u$-run, it does not imply that $u=w[i,i+\vert u\vert -1]$.

\begin{lemma}
\label{ryhx6x6c6}
Suppose that  $(i_1,j_1), (i_2,j_2)\in\RunBorder_{w,u}$. We have that
\begin{itemize}
\item If $i_1<i_2$ then $j_1+\vert u\vert+1 <i_2$.
\item If $i_1>i_2$ then $j_2+\vert u\vert+1 <i_1$.
\item If $i_1=i_2$ then $j_1=j_2$.
\item If $j_1=j_2$ then $i_1=i_2$.
\end{itemize}
\end{lemma}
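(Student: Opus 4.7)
The plan is to prove all four bullets via a single overlap-versus-maximality argument. First I dispose of bullets 3 and 4, which are essentially degenerate cases. If $i_1 = i_2$ and, without loss of generality, $j_1 < j_2$, then $w[i_1 - |u|, j_1 + |u| + 1]$ is a prefix of $w[i_2 - |u|, j_2 + |u|] \in \PowFactor(u)$, hence itself in $\PowFactor(u)$, contradicting the right non-extendability condition in the definition of the run $(i_1, j_1)$. Bullet 4 is the mirror image using left non-extendability of $(i_2, j_2)$.

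Bullets 1 and 2 are interchanged by swapping the indices $1$ and $2$, so I will prove only bullet 1. Suppose for contradiction that $i_1 < i_2 \leq j_1 + |u| + 1$, and set $p_\ell := w[i_\ell - |u|, j_\ell + |u|] \in \PowFactor(u)$ for $\ell \in \{1, 2\}$. I first rule out $j_1 \geq j_2$: otherwise $w[i_2 - |u| - 1, j_2 + |u|]$ is a factor of $p_1$, hence in $\PowFactor(u)$, violating the left non-extendability of $(i_2, j_2)$. So $j_1 < j_2$, and the overlap region $w[i_2 - |u|, j_1 + |u|]$ has length $j_1 - i_2 + 2|u| + 1 \geq |u|$ by the standing hypothesis $i_2 \leq j_1 + |u| + 1$.

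The main step, and the step I expect to require the most care, is to reduce to the case where $p_1$ and $p_2$ lie in the same member of $\{\Factor(u^\infty), \Factor((u^R)^\infty)\}$. If $p_1 \in \Factor(u^\infty)$ and $p_2 \in \Factor((u^R)^\infty)$, then the overlap lies in $\Factor(u^\infty) \cap \Factor((u^R)^\infty)$ and has length at least $|u|$. Any length-$|u|$ factor of the overlap is then at once a conjugate of $u$ (as a length-$|u|$ factor of $u^\infty$) and a conjugate of $u^R$ (for the same reason on the other side), so $u$ and $u^R$ are conjugate as words. But conjugacy of $u$ and $u^R$ gives $\Factor(u^\infty) = \Factor((u^R)^\infty)$, and the mixed case collapses to the pure one. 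Thus I may assume $p_1, p_2 \in \Factor(u^\infty)$.

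Each of $p_1, p_2$ is a factor of $u^\infty$ of length at least $\gamma|u| \geq |u|$, so its first $|u|$ characters form a conjugate of $u$ and pin down its phase in $u^\infty$; an overlap of length at least $|u|$ forces the two phases to be consistent, so the combined window $w[i_1 - |u|, j_2 + |u|]$ is itself a factor of $u^\infty$. Its prefix $w[i_1 - |u|, j_1 + |u| + 1]$ (valid since $j_1 < j_2$ gives $j_1 + |u| + 1 \leq j_2 + |u|$) therefore lies in $\Factor(u^\infty) \subseteq \PowFactor(u)$, contradicting the right non-extendability of $(i_1, j_1)$ and finishing bullet 1.
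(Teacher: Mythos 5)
Your proof is correct and follows essentially the same route as the paper's: rule out $j_1\geq j_2$ by the left non-extendability of $(i_2,j_2)$, then glue the two period-$\vert u\vert$ runs across an overlap of length at least $\vert u\vert$ into a single period-$\vert u\vert$ word, contradicting the right non-extendability of $(i_1,j_1)$. The only difference is that you explicitly handle the mixed case $p_1\in\Factor(u^{\infty})$, $p_2\in\Factor((u^R)^{\infty})$ via conjugacy of $u$ and $u^R$, a point the paper elides by arguing directly with the period $\vert u\vert$.
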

\begin{proof}
Suppose that $i_1<i_2$.
\begin{itemize}
\item
Suppose that $j_1\geq j_2$. Then $w[i_2-\vert u\vert-1,j_2+\vert u\vert]\in\PowFactor(u)$ since $w[i_1-\vert u\vert, j_1+\vert u\vert]\in\PowFactor(u)$. This contradicts that  $(i_2,j_2)\in\RunBorder_{w,u}$. Thus $j_1<j_2$.
\item
Suppose that $j_1<j_2$ and $i_2\leq j_1+\vert u\vert+1$. Let $t_1=w[i_1-\vert u\vert, j_1+\vert u\vert]$, let $t_2=w[i_2-\vert u\vert, j_2+\vert u\vert]$, and let $t=w[i_1-\vert u\vert, j_2+\vert u\vert]$. Then $t_1$ and $t_2$ are periodic words with the period $\vert u\vert$ and $t$ is such that $t_1\in\Prefix(t)$, $t_2\in\Suffix(t)$, and $\vert t\vert\leq \vert t_1\vert+\vert t_2\vert-\vert u\vert$, since $i_2\leq j_1+\vert u\vert+1$. To clarify the inequality the border case $i_2= j_1+\vert u\vert+1$ is depicted in Table \ref{Fig_e78suidf}. It follows that $t$ is periodic with the period $\vert u\vert$ and consequently $w[i_1-\vert u\vert, j_2+\vert u\vert ]\in\PowFactor(u)$. This is a contradiction to that $(i_1,j_1), (i_2,j_2)\in\RunBorder_{w,u}$, since $(i_1,j_1)\in\RunBorder_{w,u}$ implies that \[w[i_1-\vert u\vert, j_1+\vert u\vert +1]\not \in\PowFactor(u)\] and $(i_2,j_2)\in\RunBorder_{w,u}$ implies that \[w[i_2-\vert u\vert-1, j_2+\vert u\vert ]\not\in \PowFactor(u)\mbox{.}\]

We conclude that $j_1+\vert u\vert+1<i_2$.
\begin{table}[ht]
\centering
\begin{tabular}{|c|c|l|l|c|l|c|l|}
\hline
\multicolumn{4}{|c|}{$t$}        \\ \hline
\multicolumn{2}{|c|}{$t_1$} &   \multicolumn{2}{c|}{ }       \\ \hline
$w[i_1-\vert u\vert,j_1]$ & $w[j_1+1, j_1+\vert u\vert]$ & $w[j_1+\vert u\vert+1]$   &       \\ \hline
  & $w[i_2-\vert u\vert, i_2-1]$ & $w[i_2]$ & $w[i_2+1, j_2+\vert u\vert]$          \\ \hline
 &   \multicolumn{3}{c|}{$t_2$ }       \\ \hline
\end{tabular}
\caption{Case for $j_1<j_2$ and $i_2= j_1+\vert u\vert+1$.}
\label{Fig_e78suidf}
\end{table}

\end{itemize}

The case $i_1>i_2$ is analogous. Also it is straightforward to see that $i_1=i_2$ if and only if $j_1=j_2$.
This completes the proof.
\end{proof}


To simplify the presentation of our result we introduce a $\mirror$ function.
Given $i_1\leq i_2\in\mathbb{N}_1$ and $j, j_1, j_2\in\{i_1,i_1+1, \dots, i_2\}$ with $j_1\leq j_2$, let
\begin{itemize}
\item $\mirror(i_1,j,i_2)=\overline j\in\{i_1,i_1+1, \dots, i_2\}$ such that $j-i_1=i_2-\overline j$.
\item $\mirror(i_1,j_1,j_2,i_2)=(m_1,m_2)$, where \[m_1=\mirror(i_1,j_2,i_2)\quad\mbox{ and }\quad m_2=\mirror(i_1,j_1,i_2)\mbox{.}\]
\end{itemize}
\begin{example}
If $i_1=3, j_1=5, j_2=9, i_2=10$ then $m_1=\mirror(3,9,10)=4$, $m_2=\mirror(3,5,10)=8$, and thus $\mirror(3,5,9,10)=(4,8)$.
\end{example}

The next proposition shows that a mirror image of a run in a palindrome is also a run on condition that the border of a palindrome is sufficiently far from the border of the run.
\begin{proposition}
\label{vnm87sm26g}
If $i_1,j_1,j_2,i_2\in\mathbb{N}_1$, $i_1+\vert u\vert< j_1< j_2$, $j_2+\vert u\vert< i_2$, $w[i_1,i_2]\in\Pal$, $(j_1,j_2)\in\RunBorder_{w,u}$, and  $(m_1,m_2)=\mirror(i_1,j_1,j_2,i_2)$ 
 then $(m_1,m_2)\in\RunBorder_{w,u}$.
\end{proposition}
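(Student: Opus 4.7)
The plan is to exploit two basic facts: first, that a palindrome $w[i_1,i_2]$ reads the same backwards, so for any subinterval $[a,b]\subseteq[i_1,i_2]$ the factor $w[a,b]$ equals $w[i_1+i_2-b,i_1+i_2-a]^R$; and second, that the set $\PowFactor(u)=\Factor(u^{\infty})\cup\Factor((u^R)^{\infty})$ is closed under reversal, because $\Factor(u^{\infty})^R=\Factor((u^R)^{\infty})$. With these two observations the proof becomes a bookkeeping check that each of the five defining conditions of $\RunBorder_{w,u}$ transfers from $(j_1,j_2)$ to $(m_1,m_2)$.

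First I would unpack the definitions: $m_1=i_1+i_2-j_2$ and $m_2=i_1+i_2-j_1$, so $m_2-m_1=j_2-j_1$, which immediately gives $m_1<m_2$ and $m_2-m_1+1\geq(\gamma-2)|u|$. The bound $|u|+1<m_1$ follows from the hypothesis $j_2+|u|<i_2$ together with $i_1\geq 1$: indeed $m_1=i_1+i_2-j_2\geq 1+(|u|+1)=|u|+2$.

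Next I would verify the three factor conditions. The assumption $i_1+|u|<j_1$ and $j_2+|u|<i_2$ ensures that all three intervals $[j_1-|u|,j_2+|u|]$, $[j_1-|u|,j_2+|u|+1]$, $[j_1-|u|-1,j_2+|u|]$ sit inside $[i_1,i_2]$, and their $\mirror$-images are exactly $[m_1-|u|,m_2+|u|]$, $[m_1-|u|-1,m_2+|u|]$, $[m_1-|u|,m_2+|u|+1]$ respectively. Since $w[i_1,i_2]$ is a palindrome, each mirrored factor equals the reversal of the original. Applying closure of $\PowFactor(u)$ under reversal, $w[m_1-|u|,m_2+|u|]\in\PowFactor(u)$, while $w[m_1-|u|-1,m_2+|u|]$ and $w[m_1-|u|,m_2+|u|+1]$ are outside $\PowFactor(u)$; this gives the remaining three conditions of $\RunBorder_{w,u}$.

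There is no real obstacle here, only the need to be careful about which of the two "extension" conditions at $(j_1,j_2)$ corresponds under mirroring to which at $(m_1,m_2)$: extending by one position on the right at $(j_1,j_2)$ mirrors to extending by one position on the left at $(m_1,m_2)$, and vice versa. Once the reversal-closure of $\PowFactor(u)$ is stated, the rest is purely arithmetic on the indices.
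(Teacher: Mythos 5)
Your proof is correct and follows essentially the same route as the paper: use the palindrome $w[i_1,i_2]$ together with the margin conditions $i_1+\vert u\vert<j_1$ and $j_2+\vert u\vert<i_2$ to identify $w[j_1-\vert u\vert-1,j_2+\vert u\vert+1]$ with the reversal of $w[m_1-\vert u\vert-1,m_2+\vert u\vert+1]$, and then transfer the three $\PowFactor(u)$ conditions via reversal-closure of $\PowFactor(u)$. You are in fact slightly more thorough than the paper in explicitly verifying the arithmetic conditions $m_1<m_2$, $m_2-m_1+1\geq(\gamma-2)\vert u\vert$, and $\vert u\vert+1<m_1$ from the definition of $\RunBorder_{w,u}$, which the paper leaves implicit.
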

\begin{proof}
From $w[i_1,i_2]\in\Pal$, $i_1+\vert u\vert< j_1< j_2$, and $j_2+\vert u\vert< i_2$, it follows that \begin{equation}\label{rbnch76hdy}w[j_1-\vert u\vert -1,j_2+\vert u\vert +1]=(w[m_1-\vert u\vert-1,m_2+\vert u\vert+1])^R\mbox{.}\end{equation}
Since $(j_1,j_2)\in\RunBorder_{w,u}$ we have that \begin{itemize}\item $w[j_1-\vert u\vert ,j_2+\vert u\vert]\in\PowFactor(u)$, \item $w[j_1-\vert u\vert-1,j_2+\vert u\vert]\not\in\PowFactor(u)$, and \item if $w[j_1-\vert u\vert,j_2+\vert u\vert+1]\not\in\PowFactor(u)$.\end{itemize} From (\ref{rbnch76hdy}) it follows that \begin{itemize}\item $w[m_1-\vert u\vert,m_2+\vert u\vert]\in\PowFactor(u)$, \item $w[m_1-\vert u\vert-1,m_2+\vert u\vert]\not\in\PowFactor(u)$, and \item $w[m_1-\vert u\vert,m_2+\vert u\vert+1]\not\in\PowFactor(u)$.\end{itemize} Consequently $(m_1,m_2)\in\RunBorder_{w,u}$. This completes the proof.
\end{proof}

\section{Factorization and the reduced word}
We define an order on the set $\RunBorder_{w,u}$ as follows: $(i_1,j_1)<(i_2,j_2)$ if and only if $i_1<i_2$, where $(i_1,j_1)\not= (i_2,j_2)\in\RunBorder_{w,u}$. Lemma \ref{ryhx6x6c6} implies that $\RunBorder_{w,u}$ is totally ordered and since $i$ is a positive integer for every $(i,j)\in\RunBorder_{w,u}$, we have also that $\RunBorder_{w,u}$ is well ordered. It means that the function $\min\{S\}$ is well defined for every $S\subseteq \RunBorder_{w,u}$.

Let $\minBorder_{w,u}: \mathbb{N}_1\rightarrow \RunBorder_{w,u}$ be defined as follows:
\[\minBorder_{w,u}(1)=\min\{\RunBorder_{w,u}\}\mbox{ }\]
and for every $k\in\mathbb{N}_1$ we define that \[\minBorder_{w,u}(k+1)=\min\{\RunBorder_{w,u}\setminus \bigcup_{i=1}^k\{\minBorder_{w,u}(i)\}\}\mbox{.}\]

We define a special factorization of the word $w$, that will allow us to construct an infinite power free word based on $w$ with ``reduced'' powers of $u$.
\begin{definition}
Let $\factrz(w,u)=(\overrightarrow{w}, \overrightarrow{z}, \overrightarrow{d})$ be defined as follows:
\begin{itemize}
\item $k\in\mathbb{N}_1$,
\item $j_0=0$, $(i_k,j_k)=\minBorder_{w,u}(k)$,
\item $w_k=w[j_{k-1}+1,i_{k}-1]$,
\item $z_k\in\PowFactor(u)\cap\Alpha^{\vert u\vert}$ and $d_k\in\mathbb{Q}_1$ are such that $w[i_k,j_k]=z_k^{d_k}$, 
\item $\overrightarrow{w}=(w_1,w_2,\dots)$, $\overrightarrow{z}=(z_1,z_2,\dots)$, and $\overrightarrow{d}=(d_1,d_2,\dots)$.
\end{itemize}
\end{definition}
\begin{remark}It is clear that $\factrz(w,u)$ exists and is uniquely determined.  Obviously we have that $w=w_1z_1^{d_1}w_2z_2^{d_2}\dots\mbox{.}$
\end{remark}

We define a set of functions, that we will use to ``reduce'' the exponents $d_k$ of factors $z_k$.
Fix $h\in\mathbb{N}_1$ with $h\geq \gamma$. Let \[\begin{split}\Phi_h=\{\phi:\mathbb{Q}_1\rightarrow\mathbb{Q}_1\mid \gamma-2\leq \phi(q)< h\mbox{ and }q-\phi(q)\in\mathbb{N}_0 \}\mbox{.}\end{split}\]

The next lemma, less formally said, shows that the reduction by $\phi\in\Phi_h$ preserves the reverse relation.
\begin{lemma}
\label{fyue7dyiiu7d}
If $\phi\in\Phi_h$, $q\in\mathbb{Q}_1$, $t,v\in\Alpha^{+}$, $\vert t\vert =\vert v\vert$, and $v^q=(t^q)^R$, then $v^{\phi(q)}=(t^{\phi(q)})^R$.
\end{lemma}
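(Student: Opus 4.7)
The plan is to deduce $v^{\phi(q)}=(t^{\phi(q)})^R$ from the hypothesis $v^q=(t^q)^R$ by identifying $u^{\phi(q)}$ as the suffix (respectively, prefix) of $u^q$ of length $\phi(q)\vert u\vert$ for $u\in\{v,t\}$, and then equating suffixes on both sides of the given identity.

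First I would unpack the rational-power notation. Setting $m=q-\phi(q)$, which lies in $\mathbb{N}_0$ by the definition of $\Phi_h$, one gets $\phi(q)\leq q$ and the two numbers $q,\phi(q)$ share the same fractional part. Writing $u^q=u^{\lfloor q\rfloor}\overline u$ and $u^{\phi(q)}=u^{\lfloor\phi(q)\rfloor}\overline u$ with the \emph{same} proper prefix $\overline u$ of $u$ (of length $(q-\lfloor q\rfloor)\vert u\vert$), I obtain the factorization
\[ u^q \;=\; u^{m+\lfloor\phi(q)\rfloor}\overline u \;=\; u^{m}\cdot u^{\phi(q)}, \]
so $u^{\phi(q)}$ is the suffix of $u^q$ of length $\phi(q)\vert u\vert$.

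Second I would check that $u^{\phi(q)}$ is also the prefix of $u^q$ of length $\phi(q)\vert u\vert$. The first $\lfloor\phi(q)\rfloor$ copies of $u$ inside $u^{\lfloor q\rfloor}\overline u$ are visible directly, and whenever the common fractional part is nonzero one has $\lfloor\phi(q)\rfloor<\lfloor q\rfloor$, so the next $(\phi(q)-\lfloor\phi(q)\rfloor)\vert u\vert$ letters form a prefix of a further copy of $u$, and by length considerations this prefix is precisely $\overline u$. The degenerate cases ($\phi(q)=q$, or $\phi(q)$ an integer forcing $q$ an integer) are immediate.

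Combining the two observations, the suffix of $v^q$ of length $\phi(q)\vert v\vert$ equals $v^{\phi(q)}$, while the prefix of $t^q$ of length $\phi(q)\vert t\vert$ equals $t^{\phi(q)}$; reversing, the suffix of $(t^q)^R$ of length $\phi(q)\vert t\vert$ equals $(t^{\phi(q)})^R$. Since $\vert v\vert=\vert t\vert$, reading off the suffix of length $\phi(q)\vert v\vert$ on both sides of the assumption $v^q=(t^q)^R$ yields the desired identity. I expect the main obstacle to be the bookkeeping around the rational-power notation, in particular making it explicit that the fractional tail $\overline u$ is literally the same prefix of $u$ inside $u^q$ and inside $u^{\phi(q)}$, and disposing of the degenerate cases cleanly; once that is in place, the rest is a routine prefix-suffix exchange under reversal.
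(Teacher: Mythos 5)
Your proposal is correct and follows essentially the same route as the paper: both rest on the observation that $q-\phi(q)\in\mathbb{N}_0$ forces $u^q$ and $u^{\phi(q)}$ to share the same fractional tail $\overline u$, so that $u^q=u^{q-\phi(q)}\cdot u^{\phi(q)}$ and $u^{\phi(q)}\in\Prefix(u^q)$, after which the identity follows by comparing suffixes of $v^q=(t^q)^R$. If anything, you make explicit the prefix/suffix exchange that the paper's own proof leaves implicit in its final sentence.
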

\begin{proof}
Let $\vert t\vert=k$.
Then $v=v_1v_2\dots v_k$ and $t=t_1t_2\dots t_k$, where $v_i,t_i\in\Alpha$ and $i\in\{1,2,\dots,k\}$. Let $p=q\pmod{1}$. 


We have that $p\in\{0,\frac{1}{k}, \frac{2}{k}, \dots, \frac{k-1}{k}\}$. Let $j\in\{0,1,2,\dots,k-1\}$ be such that $p=\frac{j}{k}$.
Then $t^q=t^{\lfloor q\rfloor} t_1t_2\dots t_j$ and $v=v^{\lfloor q\rfloor}v_1v_2\dots v_j$.
Since $q-\phi(q)\in\mathbb{N}_0$, we have that $t^{\phi(q)}=t^{\lfloor \phi(q)\rfloor} t_1t_2\dots t_j$ and $v^{\phi(i)}=v^{\lfloor \phi(q)\rfloor}v_1v_2\dots v_j$.
Because $1\leq \gamma-2\leq \phi(q)$, it follows that if $v^q=(t^q)^R$ then $v^{\phi(q)}=(t^{\phi(q)})^R$.
This completes the proof.
\end{proof}

Fix $(\overrightarrow{w}, \overrightarrow{z}, \overrightarrow{d})=\factrz(w,u)$.
\begin{definition}
Given $\phi\in\Phi_h$, 
let \[\reduce_{w,u,\phi}=w_1z_1^{\phi(d_1)}w_2z_2^{\phi(d_2)}\cdots \mbox{.}\]
\end{definition}
We call the word $\reduce_{w,u,\phi}\in\Alpha^{\infty}$ the \emph{reduced word} of $w$. The basic property of the reduced word $\reduce_{w,u,\phi}$ is that powers of $u$ in $\reduce_{w,u,\phi}$ are bounded by $h+2$.
\begin{lemma}
\label{tufdkd3455}
If $\phi\in\Phi_h$ and $t\in\PowFactor(u)\cap\Alpha^{\vert u\vert}$ then $t^{h+2}\not\in\Factor(\reduce_{w,u,\phi})$. 
\end{lemma}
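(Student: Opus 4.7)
My plan is to argue by contradiction: suppose $T := t^{h+2}$ is a factor of $\reduce_{w,u,\phi}$. Then $T$ is a $u$-periodic word of length $(h+2)\vert u\vert$ belonging to $\PowFactor(u)$. I will show that the maximal $u$-power factors of $\reduce_{w,u,\phi}$ all have length strictly less than $(h+2)\vert u\vert$, which rules out $T$.

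For each run $k$, define the \emph{$\reduce$-extended region} of $k$ as the concatenation of the last $\vert u\vert$ characters of $w_k$, the block $z_k^{\phi(d_k)}$, and the first $\vert u\vert$ characters of $w_{k+1}$. Since the $w_k$'s are untouched by the reduction and $d_k - \phi(d_k) \in \mathbb{N}_0$ means the reduction removes an integer number of copies of $z_k$, this region is a $u$-power of length $(\phi(d_k)+2)\vert u\vert < (h+2)\vert u\vert$ whose boundary behaviour in $\reduce_{w,u,\phi}$ mirrors that of the $w$-extended region $[i_k-\vert u\vert,j_k+\vert u\vert]$ of the same run.

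I then case-split on whether $T$ overlaps any $z_k^{\phi(d_k)}$ in $\reduce_{w,u,\phi}$. If not, $T$ is contained in some $w_k$ (also as a factor of $w$). Letting $T'$ denote the maximal $u$-power extension of $T$ in $w$, either $\vert T'\vert \geq \gamma\vert u\vert$, in which case $T'$ is a $w$-extended region corresponding to some element of $\RunBorder_{w,u}$ and its intersection with $w_k$ has length at most $\vert u\vert$ (since $w$-extended regions meet each $w_k$ only in its first or last $\vert u\vert$ characters), forcing $\vert T\vert \leq \vert u\vert$; or $\vert T'\vert < \gamma\vert u\vert$, forcing $\vert T\vert < \gamma\vert u\vert$. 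In either subcase, $\vert T\vert < (h+2)\vert u\vert$ because $h \geq \gamma$.

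If $T$ overlaps $z_{k_0}^{\phi(d_{k_0})}$ for some $k_0$, then $T$'s unique period word must be a rotation of $z_{k_0}$. I claim $T$ is contained in the $\reduce$-extended region of $k_0$, yielding $\vert T\vert \leq (\phi(d_{k_0})+2)\vert u\vert < (h+2)\vert u\vert$. To establish the claim, extending $T$ one character past its left boundary introduces $w[i_{k_0}-\vert u\vert-1]$, which cannot continue the $z_{k_0}$-period because the maximality clause $w[i_{k_0}-\vert u\vert-1,j_{k_0}+\vert u\vert]\not\in\PowFactor(u)$ is part of the definition of $\RunBorder_{w,u}$; the right extension is blocked symmetrically by $w[j_{k_0}+\vert u\vert+1]$. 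These period-breaking characters lie inside the unchanged $w_{k_0}$ and $w_{k_0+1}$ and therefore appear at the analogous positions of $\reduce_{w,u,\phi}$, still breaking the period there. The main obstacle to verify carefully is the configuration where $\vert w_{k_0\pm1}\vert$ is close to its minimum value $\vert u\vert+1$, so that the $\reduce$-extended regions of adjacent runs overlap in $\reduce_{w,u,\phi}$; the claim that $T$ cannot bridge two consecutive $\reduce$-extended regions still reduces to the same period-breaking argument applied inside the unchanged $w_k$'s.
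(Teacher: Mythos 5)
Your argument is correct and takes essentially the same route as the paper: the paper's own (two-sentence) proof rests on exactly the two facts you use, namely $\phi(q)<h$ and the two maximality conditions in the definition of $\RunBorder_{w,u}$, which together bound every maximal $\PowFactor(u)$ factor of $\reduce_{w,u,\phi}$ by $(\phi(d_k)+2)\vert u\vert<(h+2)\vert u\vert$; you have simply filled in the details the paper omits. One step is phrased loosely — in your second case, ``extending $T$ one character past its left boundary'' should read ``past the left boundary of the extended region of run $k_0$,'' which is justified because $T$ and that region would then overlap in at least $\vert u\vert$ positions, so their union is again $\vert u\vert$-periodic and would violate $a_{k_0}z_{k_0}z_{k_0}^{\phi(d_{k_0})}\widehat z_{k_0}\not\in\PowFactor(u)$ — but this is an expository, not a substantive, gap.
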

\begin{proof}
Realize that $\phi(q)< h$ for all $q\in\mathbb{Q}_1$ and if $(i,j)\in\RunBorder_{w,u}$, then \[w[i-\vert u\vert-1, j+\vert u\vert]\not\in\PowFactor(u)\quad\mbox{ and }\quad w[i-\vert u\vert, j+\vert u\vert+1]\not\in\PowFactor(u)\mbox{.}\]
The lemma follows.
\end{proof}

For our convenience when working with the reduced word $\reduce_{w,u,\phi}$ we introduce some more functions.
Given $j\in\mathbb{N}_1$ and $\phi\in\Phi_h$, let
\[\kappa_{w,u}(j)=\sum_{i=1}^j(\vert w_j\vert +d_j\vert z_j\vert)\quad\mbox{ and }\]
\[\overline\kappa_{w,u,\phi}(j)=\sum_{i=1}^j(\vert w_j\vert +\phi(d_j)\vert z_j\vert)\mbox{.}\]
In addition we define $\kappa_{w,u}(0)=0$ and $\overline \kappa_{w,u,\phi}(0)=0$.

We define a set $\widetilde \RunBorder$ to be the set of positions that are covered by runs; formally 
let $\widetilde\RunBorder_{w,u}=\{i\in\mathbb{N}_1\mid \mbox{ there is } (i_1,i_2)\in\RunBorder_{w,u}\mbox{ such that } i_1\leq i\leq i_2\}\mbox{.}$
Let $\rpoDom_{w,u}=\mathbb{N}_1\setminus\widetilde \RunBorder_{w,u}\mbox{.}$ and 
let $\rpo_{w,u,\phi}: \rpoDom_{w,u}\rightarrow \mathbb{N}_1$ be a function defined as follows:
\begin{itemize}
\item Given $i\in\rpoDom_{w,u}$, let $j\in\mathbb{N}_0$ and $f\in\mathbb{N}_1$ be such that $\kappa_{w,u}(j)< i\leq\kappa_{w,u}(j+1)$ and $i=f+\kappa_{w,u}(j)$.
\item We define that $\rpo_{w,u,\phi}(i)=\overline\kappa_{w,u,\phi}(j)+f$.
\end{itemize}

We will need the following simple lemma.
\begin{lemma}
\label{ryg67hr778f}
For every $k\in\mathbb{N}_1$ we have that $\vert w_k\vert> \vert u\vert$.
\end{lemma}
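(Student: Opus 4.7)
The plan is to split into the base case $k=1$ and the case $k\geq 2$, using the definition of $\RunBorder_{w,u}$ for the former and Lemma \ref{ryhx6x6c6} for the latter.

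First I would unfold the definition of $w_k$. By construction, $w_k = w[j_{k-1}+1, i_k-1]$ where $j_0 = 0$ and $(i_k, j_k) = \minBorder_{w,u}(k)$, so $\vert w_k\vert = i_k - j_{k-1} - 1$. For $k=1$, this gives $\vert w_1\vert = i_1 - 1$, and the defining conditions of $\RunBorder_{w,u}$ require $\vert u\vert + 1 < i_1$, which immediately yields $\vert w_1\vert = i_1 - 1 > \vert u\vert$.

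For $k\geq 2$, I would invoke Lemma \ref{ryhx6x6c6}. Since $\minBorder_{w,u}$ enumerates the elements of $\RunBorder_{w,u}$ in strictly increasing order under the total order on $\RunBorder_{w,u}$ (defined by the first coordinate), we have $i_{k-1} < i_k$. The first bullet of Lemma \ref{ryhx6x6c6} then gives $j_{k-1} + \vert u\vert + 1 < i_k$, i.e., $i_k - j_{k-1} - 1 > \vert u\vert$, hence $\vert w_k\vert > \vert u\vert$.

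I do not expect a main obstacle here — the lemma is a bookkeeping statement that directly repackages the separation between consecutive runs already proved in Lemma \ref{ryhx6x6c6} together with the lower bound on $i$ built into the definition of $\RunBorder_{w,u}$. The only thing to be careful about is making explicit that $\minBorder_{w,u}$ indeed enumerates runs in increasing order of their first coordinate, which is a direct consequence of how $\min$ is defined on the well-ordered set $\RunBorder_{w,u}$.
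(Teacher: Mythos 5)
Your proposal is correct and follows exactly the same route as the paper's (very terse) proof: the case $k=1$ from the condition $\vert u\vert+1<i$ in the definition of $\RunBorder_{w,u}$, and the case $k\geq 2$ from the separation bound $j_{k-1}+\vert u\vert+1<i_k$ of Lemma \ref{ryhx6x6c6}. You have merely made explicit the arithmetic ($\vert w_k\vert=i_k-j_{k-1}-1$) and the monotonicity of $\minBorder_{w,u}$ that the paper leaves implicit.
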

\begin{proof}
For $k=1$ the the lemma follows from the definition of $\RunBorder_{w,u}$.
For $k\geq 2$, the lemma from Lemma \ref{ryhx6x6c6}.

This ends the proof.
\end{proof}

The next lemma says that the function $\rpo_{w,u,\phi}$ forms a ``natural'' bijection between $u$-runs of $w$ and $u$-runs of the reduced word. Realize that $p_j=\kappa_{w,u}(j)$ is the end position of the $j$-th run of $w$. Then we have that $p_j+1\in\rpoDom_{w,u}$ and consequently $n_j=\rpo_{w,u,\phi}(p_j+1)$ is defined. We show that $n_j-1$ is the end position of the $j$-th run in the reduced word.
\begin{proposition}
\label{tu7bdj80kj}
If $\phi\in\Phi_h$, $\overline w=\reduce_{w,u,\phi}$, and $\overrightarrow{f}=(\phi(d_1),\phi(d_2),\dots)$ then 
$(\overrightarrow{w}, \overrightarrow{z}, \overrightarrow{f})=\factrz(\overline w,u)$.
\end{proposition}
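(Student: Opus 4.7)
The plan is to verify that the positions occupied by the reduced powers $z_k^{\phi(d_k)}$ in $\overline w$ are precisely the elements of $\RunBorder_{\overline w,u}$, enumerated in their natural order. Let $i'_k=\overline\kappa_{w,u,\phi}(k-1)+|w_k|+1$ and $j'_k=\overline\kappa_{w,u,\phi}(k)$ denote the start and end positions of the $k$-th reduced block in $\overline w$. By construction $\overline w=w_1z_1^{\phi(d_1)}w_2z_2^{\phi(d_2)}\cdots$, so $\overline w[j'_{k-1}+1,i'_k-1]=w_k$ and $\overline w[i'_k,j'_k]=z_k^{\phi(d_k)}$ (with the convention $j'_0=0$). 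The task therefore reduces to showing $\minBorder_{\overline w,u}(k)=(i'_k,j'_k)$ for every $k$, which splits into (a) verifying $(i'_k,j'_k)\in\RunBorder_{\overline w,u}$ and (b) ruling out any additional elements.

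For step (a), the length bound $j'_k-i'_k+1=\phi(d_k)|z_k|\geq(\gamma-2)|u|$ is immediate from $\phi\in\Phi_h$, and $|u|+1<i'_k$ follows from $|w_1|>|u|$ (Lemma \ref{ryg67hr778f}). For the extended-power condition, I would exploit the identity $w[i_k-|u|,j_k+|u|]=z_k^{d_k+2}$, which holds because the extended factor lies in $\PowFactor(u)=\PowFactor(z_k)$ and contains the inner block $z_k^{d_k}$ starting at a $z_k$-period boundary. Consequently the border pieces $w[i_k-|u|,i_k-1]=z_k$ and $w[j_k+1,j_k+|u|]$ (a specific rotation of $z_k$) are determined solely by the fractional part of $d_k$. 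Since $d_k-\phi(d_k)\in\mathbb{N}_0$, the reduced exponent has the same fractional part, so wrapping $z_k^{\phi(d_k)}$ in $\overline w$ with these identical border pieces yields $\overline w[i'_k-|u|,j'_k+|u|]=z_k^{\phi(d_k)+2}\in\PowFactor(u)$. Non-extendability on either side follows because $\overline w[i'_k-|u|-1]=w[i_k-|u|-1]$ and $\overline w[j'_k+|u|+1]=w[j_k+|u|+1]$: the characters required to keep the extension inside $\Factor(u^\infty)$ or inside $\Factor((u^R)^\infty)$ depend only on the boundary alignment of $z_k$ in the ambient $u^\infty$ (or $(u^R)^\infty$), and this alignment is uniquely pinned down by the exposed power of length $(\phi(d_k)+2)|u|\geq 3|u|$; since the required characters fail in $w$, they fail in $\overline w$.

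For step (b), suppose $(i^*,j^*)\in\RunBorder_{\overline w,u}\setminus\{(i'_k,j'_k):k\in\mathbb{N}_1\}$. Lemma \ref{ryhx6x6c6} applied to $\overline w$ forces $(i^*,j^*)$ to be separated from every $(i'_k,j'_k)$ by more than $|u|$ positions, so the extended span $[i^*-|u|,j^*+|u|]$ fits inside a single block $w_m$ of $\overline w$. Since $w_m$ is a verbatim copy of $w[j_{m-1}+1,i_m-1]$, the same factor of $\PowFactor(u)$ of length $\geq\gamma|u|$ occurs in $w$ strictly between the runs $(i_{m-1},j_{m-1})$ and $(i_m,j_m)$; the case $m=1$ would in addition contradict $u\in\Pi_\gamma(w)$ via $\Prefix(w)\cap\PowFactor(u)\cap\Alpha^{\gamma|u|}=\emptyset$. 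Taking its maximal $\PowFactor(u)$-extension in $w$ produces an element of $\RunBorder_{w,u}$ between $\minBorder_{w,u}(m-1)$ and $\minBorder_{w,u}(m)$, contradicting the enumeration.

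Combining (a) and (b), $\minBorder_{\overline w,u}(k)=(i'_k,j'_k)$ for all $k$, and reading off the factorization of $\overline w$ yields $(\overrightarrow w,\overrightarrow z,\overrightarrow f)=\factrz(\overline w,u)$. The main obstacle is the non-extendability analysis in step (a): one must handle both $\Factor(u^\infty)$ and $\Factor((u^R)^\infty)$ components of $\PowFactor(u)$, and invoke Fine--Wilf-type rigidity (justified by the length $\geq 3|u|$ of the exposed power) to confirm that the alignment forcing the boundary characters is identical in $w$ and $\overline w$.
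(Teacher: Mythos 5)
Your proposal is correct and follows essentially the same route as the paper's proof: you show that each reduced block $z_k^{\phi(d_k)}$, with its unchanged $|u|$-letter margins and the same blocking letters on either side, is a run of $\overline w$ (using that $d_k-\phi(d_k)\in\mathbb{N}_0$ preserves the boundary alignment), and you exclude extra runs by noting that any such run would lie, margins included, inside a verbatim block $w_m$ and hence pull back to a forbidden run of $w$. The paper's proof does exactly this, naming the blocking letters $a_i,b_i$ and the collection of reduced runs $\Omega$; your remarks on the two components of $\PowFactor(u)$ and the $m=1$ prefix case only make explicit details the paper leaves implicit.
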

\begin{proof}
We have that $w=w_1z_1^{d_1}w_2z_2^{d_2}\cdots$ and that $\overline w=w_1z_1^{\phi(d_1)}w_2z_2^{\phi(d_2)}\cdots$.

The definition of $\RunBorder_{w,u}$ implies that for every $i\in\mathbb{N}_1$ we have that $z_i\in\Suffix(w_i)$, and there is $\widehat z_i\in\Factor(z_iz_i)\cap\Alpha^{\vert z_i\vert}$ such that $\widehat z_i\in\Prefix(w_{i+1})$. Let $a_i,b_i\in\Alpha$ be such that $a_iz_i\in\Suffix(w_i)$ and $\widehat z_ib_i\in\Prefix(w_{i+1})$.
Lemma \ref{ryg67hr778f} implies that such $a_i,b_i$ exist, because $\vert w_i\vert\geq \vert u\vert+1=\vert z_i\vert+1$.

Let $\Omega=\bigcup_{j=\mathbb{N}_1}(\overline \kappa_{w,u,\phi}(j)+\vert w_{j+1}\vert +1,\overline \kappa_{w,u,\phi}(j+1)-1)$.

We have that $z_iz_i^{d_i}\widehat z_i\in\PowFactor(u)$. Since $d_i-\phi(d_i)\in\mathbb{N}_0$, it follows easily that $z_iz_i^{\phi(d_i)}\widehat z_i\in\PowFactor(u)$. Since $a_iz_iz_i^{d_i}\widehat z_i\not\in\PowFactor(u)$ and $z_iz_i^{d_i}\widehat z_ib_i\not\in\PowFactor(u)$, it follows also that $a_iz_iz_i^{\phi(d_i)}\widehat z_i\not\in\PowFactor(u)$ and $z_iz_i^{\phi(d_i)}\widehat z_ib_i\not\in\PowFactor(u)$. Since $\phi(d_i)\geq \gamma-2$ and $d_i-\phi(d_i)\in\mathbb{N}_0$, this implies that $z_i^{\gamma}\in\Factor(\overline w)$ and consequently that $u\in\Pi_{\gamma}(\overline w)$ and $\Omega\subseteq \RunBorder_{\overline w,u}$. 

Suppose that there is $(m_1, m_2)\in\RunBorder_{\overline w,u}\setminus \Omega$.
From the definition of $\RunBorder_{\overline w,u}$ we have that \begin{itemize}\item $\overline w[m_1-\vert u\vert -1, m_2+\vert u\vert]\not\in\PowFactor(u)$, \item $\overline w[m_1-\vert u\vert , m_2+\vert u\vert+1]\not\in\PowFactor(u)$, and \item $\overline w[m_1-\vert u\vert , m_2+\vert u\vert]\in\PowFactor(u)$. \end{itemize}
Lemma \ref{ryhx6x6c6} implies that there is $j\in\mathbb{N}_1$ such that 
\[\overline \kappa_{w,u,\phi}(j)+\vert u\vert +1< m_1\leq m_2\quad\mbox{ and }\quad m_2+\vert u\vert +1\leq \overline \kappa_{w,u,\phi}(j)+\vert w_{j+1}\vert\] and consequently $\overline w[m_1-\vert u\vert-1, m_2+\vert u\vert+1]\in\Factor(w_j)$. This is a contradiction, since by definition the factors $w_j$ do not contain $u$-runs.

It follows that $\RunBorder_{\overline w,u}=\Omega$. This completes the proof.
\end{proof}
From the proof of Proposition \ref{tu7bdj80kj} it follows also that $\rpo_{w,u,\phi}$ is a bijection between positions of $w$ and $\overline w$ that are not covered by $u$-runs.
\begin{corollary}
\label{dyf7ruryre}
If $\phi\in\Phi_h$ and $\overline w=\reduce_{w,u,\phi}$ then $\rpo_{w,u,\phi}: \rpoDom_{w,u}\rightarrow\rpoDom_{\overline w,u}$ is a bijection.
\end{corollary}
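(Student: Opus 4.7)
The plan is to exploit Proposition \ref{tu7bdj80kj}, which shows $\factrz(\overline w,u)=(\overrightarrow{w},\overrightarrow{z},\overrightarrow{f})$ with $\overrightarrow{f}=(\phi(d_1),\phi(d_2),\dots)$. Thus the sequence of $w_j$-blocks is identical in the factorizations of $w$ and $\overline w$; only the exponents of the $z_j$-blocks change. In particular, $\rpoDom_{\overline w,u}$ decomposes as the disjoint union of the intervals $[\overline\kappa_{w,u,\phi}(j)+1,\overline\kappa_{w,u,\phi}(j)+|w_{j+1}|]$ for $j\in\mathbb{N}_0$, because these are precisely the positions of $\overline w$ that lie in some $w_{j+1}$-block rather than in a $u$-run (the $u$-runs of $\overline w$ having been identified in the proof of Proposition \ref{tu7bdj80kj} with the set $\Omega$).

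The next step is to describe $\rpoDom_{w,u}$ analogously as the disjoint union of the intervals $[\kappa_{w,u}(j)+1,\kappa_{w,u}(j)+|w_{j+1}|]$. Then, for $i\in\rpoDom_{w,u}$, the unique $j$ and offset $f$ used in the definition of $\rpo_{w,u,\phi}$ satisfy $f\in\{1,\ldots,|w_{j+1}|\}$, because $i$ avoids the $j$-th run. The definition yields $\rpo_{w,u,\phi}(i)=\overline\kappa_{w,u,\phi}(j)+f$, which on that sub-interval is simply translation by $\overline\kappa_{w,u,\phi}(j)-\kappa_{w,u}(j)$. Hence the map is a bijection from the $j$-th sub-interval of $\rpoDom_{w,u}$ onto the $j$-th sub-interval of $\rpoDom_{\overline w,u}$.

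To conclude, it suffices to glue the per-block bijections: since the sub-intervals partition both domains, the global map $\rpo_{w,u,\phi}\colon\rpoDom_{w,u}\to\rpoDom_{\overline w,u}$ is a bijection. I do not anticipate a substantive obstacle; the content is essentially bookkeeping on top of Proposition \ref{tu7bdj80kj}. The only point requiring care is matching the index $j$ chosen by the definition of $\rpo_{w,u,\phi}$ with the $w_{j+1}$-block containing $i$, which is immediate once one uses that $i\in\rpoDom_{w,u}$ rules out the ranges occupied by the $z_{j+1}^{d_{j+1}}$-segments.
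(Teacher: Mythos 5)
Your proposal is correct and follows the same route as the paper, which simply records the corollary as an immediate consequence of Proposition \ref{tu7bdj80kj} (namely that $\RunBorder_{\overline w,u}=\Omega$, so the $w_{j+1}$-blocks of $w$ and $\overline w$ coincide and $\rpo_{w,u,\phi}$ is a translation on each block). You have just made explicit the bookkeeping the paper leaves implicit.
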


For our main result we need that $\reduce_{w,u,\phi}$ is non-ultimately periodic. We show we can select $\phi$ in such a way that this requirement is satisfied.
\begin{theorem}
\label{fyur7t7tj}
There is $\varphi\in\Phi_h$ such that $\reduce_{w,u,\varphi}$ is non-ultimately periodic. 
\end{theorem}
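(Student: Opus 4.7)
The plan is to argue by contradiction: assume that the word $\overline w_\varphi := \reduce_{w,u,\varphi}$ is ultimately periodic for every $\varphi \in \Phi_h$, and derive that $w$ itself must be ultimately periodic, contradicting the standing hypothesis.

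First I would convert linear ultimate periodicity of $\overline w_\varphi$ into eventual periodicity of its block structure. By Proposition~\ref{tu7bdj80kj}, $\factrz(\overline w_\varphi, u) = (\overrightarrow w, \overrightarrow z, (\varphi(d_1), \varphi(d_2), \ldots))$, so $\overline w_\varphi = w_1 z_1^{\varphi(d_1)} w_2 z_2^{\varphi(d_2)} \cdots$. Each $\vert w_k\vert > \vert u\vert$ by Lemma~\ref{ryg67hr778f}, while each $z_k^{\varphi(d_k)}$ is a $u$-run of length at least $(\gamma-2)\vert u\vert$, so the block decomposition is recovered uniquely from the $u$-runs of $\overline w_\varphi$. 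Ultimate periodicity of $\overline w_\varphi$ then forces the triple sequence $(w_k, z_k, \varphi(d_k))_k$ to be eventually periodic: there exist $K_\varphi, c_\varphi \in \mathbb{N}_1$ with $(w_{k+c_\varphi}, z_{k+c_\varphi}, \varphi(d_{k+c_\varphi})) = (w_k, z_k, \varphi(d_k))$ for all $k \ge K_\varphi$. Since $(w_k, z_k)_k$ does not depend on $\varphi$, it is eventually periodic with some period $c^*$ intrinsic to $w$.

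Second, a cardinality argument would handle the main subcase. For $q \in \mathbb{Q}_1$ set $S_q := \{q - n : n \in \mathbb{N}_0\} \cap [\gamma-2, h)$; a direct calculation shows $\vert S_q\vert \ge 2$ iff $q \ge \gamma - 1$. By Proposition~\ref{tu7bdj80kj}, distinct restrictions $\varphi|_{\{d_k : k \in \mathbb{N}_1\}}$ yield distinct words $\overline w_\varphi$, since the factorization recovers $\varphi$'s values on $\{d_k\}$ from $\overline w_\varphi$. If infinitely many distinct values in $\{d_k\}$ are $\ge \gamma - 1$, the number of such restrictions is at least $2^{\aleph_0}$, producing uncountably many distinct $\overline w_\varphi$; since only countably many ultimately periodic words exist over the finite alphabet $\Alpha$, some $\overline w_\varphi$ must be non-ultimately periodic, contradicting the assumption. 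In the complementary case, only finitely many distinct values of $\{d_k\}$ are $\ge \gamma - 1$, and the remaining values lie in $[\gamma-2, \gamma-1)$, where $\varphi(d_k) = d_k$ is forced; combining this with the eventual periodicity of $(\varphi(d_k))_k$ from the first step, and choosing $\varphi$ so as to distinguish the finitely many ``free'' values, forces $(d_k)_k$ itself to be eventually periodic, hence $w$ is ultimately periodic, again a contradiction.

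The main obstacle is the first step, where one must rigorously translate a linear period $p$ of $\overline w_\varphi$ into block-level periodicity of $(w_k, z_k, \varphi(d_k))_k$. Although the separation $\vert w_k\vert > \vert u\vert$ and the unique recovery in Proposition~\ref{tu7bdj80kj} make the block decomposition unambiguous, care is needed to show that $p$ aligns with an aggregate block length past the preperiod and cannot coincidentally permute distinct blocks. A secondary difficulty is the complementary case of the second step: if the finite set of $d_k$-values with $\vert S_q\vert \ge 2$ contains several values sharing a common fractional part, no $\varphi \in \Phi_h$ is injective on that set, and a more delicate combinatorial argument varying $\varphi$ on this finite set is needed to still conclude eventual periodicity of $(d_k)_k$.
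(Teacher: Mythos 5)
Your overall strategy matches the paper's: pass to the block factorization via Proposition~\ref{tu7bdj80kj}, deduce that ultimate periodicity of the reduced word forces eventual periodicity of the block sequence $(w_k,z_k,\varphi(d_k))_k$, and then exploit the remaining freedom in $\varphi$ against the non-ultimate periodicity of $w$. Your cardinality argument for the case of infinitely many distinct values $d_k\geq\gamma-1$ is a genuinely different and rather elegant substitute for what the paper does there (the paper instead extracts one arithmetic subsequence $(d_{j+i(n-k+1)})_i$ that is non-ultimately periodic and on which $\phi$ is constant, and perturbs $\phi$ on its values): distinct restrictions of $\varphi$ to $\{d_k\}$ give distinct exponent sequences, hence distinct reduced words by the uniqueness in Proposition~\ref{tu7bdj80kj}, so $2^{\aleph_0}$ words cannot all lie in the countable set of ultimately periodic words. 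It is also worth noting that in your complementary case the value set $\{d_k\mid k\in\mathbb{N}_1\}$ is automatically \emph{finite}, since $d_k\in\frac{1}{\vert u\vert}\mathbb{N}_1$ leaves at most $\vert u\vert$ possible values in $[\gamma-2,\gamma-1)$; this is what makes that case tractable and should be said explicitly.

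The genuine gap is the one you flag yourself: ``choosing $\varphi$ so as to distinguish the finitely many free values'' may be impossible, since no $\varphi\in\Phi_h$ need be injective on $\{d_k\}$ (for $\gamma=h=3$ the values $2,3,4$ all have $S_q=\{1,2\}$), and you do not supply the promised replacement. The repair is a separating-family argument rather than a cleverer single $\varphi$. For any distinct $q,q'\in\{d_k\}$ there is $\varphi_{q,q'}\in\Phi_h$ with $\varphi_{q,q'}(q)\neq\varphi_{q,q'}(q')$: if both lie in $[\gamma-2,\gamma-1)$ every $\varphi$ fixes them, and otherwise the larger has $\vert S\vert\geq 2$, so its image can be chosen to avoid that of the other. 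Under your standing assumption each sequence $(\varphi_{q,q'}(d_k))_k$ is eventually periodic, hence so is the finite product sequence over all pairs (take the least common multiple of the periods); the product map is injective on the finite value set, so $(d_k)_k$ is eventually periodic, and together with the eventual periodicity of $(w_k,z_k)_k$ this makes $w$ ultimately periodic, the desired contradiction. (The paper is equally terse at the corresponding point, declaring the existence of a suitable $\varphi$ ``straightforward to verify''; the cleanest justification there is this same contrapositive.) Your first-step concern about aligning the linear period with block boundaries is legitimate but minor: a $u$-run is determined by a window of radius $\vert u\vert+1$ around it, so translation by the period maps runs to runs in the periodic part, and Lemma~\ref{ryhx6x6c6} then orders them into an eventually periodic block sequence.
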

\begin{proof}Clearly $\Phi_h\not=\emptyset$. 
Let $\phi\in\Phi_h$ and let $\overline w=\reduce_{w,u,\phi}$. If $\overline w$ is non-ultimately periodic, then let $\varphi=\phi$ and we are done. 
Suppose that $\overline w$ is ultimately periodic.
Let $p_0,p\in\mathbb{N}_1$ be such that $w[p_0+j+ip]=w[p_0+j]$ for every $i,j\in\mathbb{N}_0$. 

We have that $\overline w=w_1z_1^{\phi(d_1)}w_2z_2^{\phi(d_2)}\dots$. Proposition \ref{tu7bdj80kj} implies that 
$(\overrightarrow{w}, \overrightarrow{z}, \overrightarrow{f})=\factrz(\overline w,u)$, where $\overrightarrow{f}=(\phi(d_1),\phi(d_2),\dots)$.

Let $m_j=\kappa_{\overline w,u}(j)$. If $j\in\mathbb{N}_1$ is such that $m_{j-1}>p_0$ then obviously \[(m_{j-1}+\vert w_j\vert+1+ip, m_j+ip)\in\RunBorder_{\overline w,u}\mbox{,}\] since $(m_{j-1}+\vert w_j\vert+1, m_j)\in\RunBorder_{\overline w,u}$ and $\overline w$ is ultimately periodic. It follows that there are $k<n\in\mathbb{N}_j$ such that 
\[\overline w=w_1z_1^{\phi(d_1)}w_2z_2^{\phi(d_2)}\dots w_{k-1}z_{k-1}^{f_{k-1}}\left(w_{k}z_{k}^{f_{k}}w_{k+1}z_{k+1}^{f_{k+1}}\dots w_{n}z_{n}^{f_{n}}\right)^{\infty}\mbox{.}\]

Since $w$ is non-ultimately periodic, it follows that there is $j\in\mathbb{N}_1$ such that for every $i\in\mathbb{N}_1$ we have that $\phi(d_j)=\phi(d_{j+i(n-k+1)})$ and the sequence $(d_j, d_{j+n-k+1}, d_{j+2(n-k+1)}, d_{j+3(n-k+1)}, \dots)$ is non-ultimately periodic.


It is straightforward to verify that there exists $\varphi\in\Phi_{h}$ such that the sequence \[(\varphi(d_j), \varphi(d_{j+n-k+1}), \varphi(d_{j+2(n-k+1)}), \varphi(d_{j+3(n-k+1)}), \dots)\] is non-ultimately periodic. In consequence $\reduce_{w,u,\varphi}$ is also non-ultimately periodic.

This completes the proof.
\end{proof}

Let us fix $\phi\in\Phi_h$ such that $\reduce_{w,u,\phi}$ is non-ultimately periodic. Theorem \ref{fyur7t7tj} asserts that such $\phi$ exists.

We will need the following elementary Lemma concerning the palindromic length. We omit the proof.
\begin{lemma}
\label{dujf720bxg}
If $t_1,t_2\in\Alpha^+$ then $\PL(t_1t_2)\leq \PL(t_1)+\PL(t_2)$.
\end{lemma}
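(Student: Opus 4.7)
The plan is essentially to concatenate optimal palindromic factorizations of $t_1$ and $t_2$ and observe that the result is a palindromic factorization of $t_1t_2$ whose length attains the claimed bound. There is no need to do anything clever here, since the inequality is not tight and we are not trying to find an optimum; we just need to exhibit \emph{some} palindromic factorization of $t_1t_2$ of length at most $\PL(t_1)+\PL(t_2)$.

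In detail, I would first invoke the definition of palindromic length to obtain nonempty palindromes $p_1,p_2,\dots,p_k$ with $t_1=p_1p_2\cdots p_k$ and $k=\PL(t_1)$, and nonempty palindromes $q_1,q_2,\dots,q_m$ with $t_2=q_1q_2\cdots q_m$ and $m=\PL(t_2)$. Such factorizations exist by the very definition of $\PL$ applied to the nonempty words $t_1$ and $t_2$. Concatenating these two sequences yields
\[
t_1t_2 \;=\; p_1p_2\cdots p_k\, q_1q_2\cdots q_m,
\]
which is a factorization of $t_1t_2$ into $k+m$ nonempty palindromes. Hence $\PL(t_1t_2)\leq k+m=\PL(t_1)+\PL(t_2)$, as claimed.

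There is no genuine obstacle: the only subtlety to verify is that the concatenation of palindromic factorizations is again a palindromic factorization (which is immediate because each $p_i$ and each $q_j$ is a nonempty palindrome on its own, independent of its neighbors), and that $\PL$ is defined as a minimum so any specific factorization provides an upper bound. This is why the author omits the proof.
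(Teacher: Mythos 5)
Your proof is correct and is exactly the standard subadditivity argument the paper has in mind when it calls the lemma elementary and omits the proof: concatenate optimal palindromic factorizations of $t_1$ and $t_2$ and use that $\PL$ is defined as a minimum. Nothing is missing.
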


Given $i_1\leq i_2\in\mathbb{N}_1$, let \[\RunBorder_{w,u}(i_1,i_2)=\{(i,j)\in\RunBorder_{w,u}\mid i_1\leq i\mbox{ and }j\leq i_2\}\mbox{.}\]

By means of the function $\rpo_{w,u,\phi}$ we map the positions of $w$ into the positions of the reduced word $\reduce_{w,u,\phi}$. Thus by mapping two positions $i\leq j$, we get a map of factors $w[i,j]$ on the factors of $\reduce_{w,u,\phi}$. The next proposition shows an upper bound on the palindromic length of the factors of $\reduce_{w,u,\phi}$ as a function of the number of runs in the preimage of these factors of $\reduce_{w,u,\phi}$.
\begin{proposition}
\label{kkm3bvs56}
If $m_1\leq m_2\in\rpoDom_{w,u}$, $g\in\mathbb{N}_1$, $\vert \RunBorder_{w,u}(m_1,m_2)\vert\leq g$, $\overline m_1=\rpo_{w,u,\phi}(m_1)$, and $\overline m_2=\rpo_{w,u,\phi}(m_2)$ then 
\[\PL(\reduce_{w,u,\phi}[\overline m_1,\overline m_2])\leq (g+1) \maxPL(w[m_1,m_2])\mbox{.}\]
\end{proposition}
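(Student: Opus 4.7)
The plan is to decompose $\reduce_{w,u,\phi}[\overline m_1, \overline m_2]$ into at most $g+1$ chunks, each of which is itself a factor of $w[m_1, m_2]$, and then apply the subadditivity of palindromic length (Lemma~\ref{dujf720bxg}).

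First I would enumerate the runs lying in $[m_1, m_2]$ in order as $(i_1, j_1) < \cdots < (i_s, j_s)$, where $s = \vert \RunBorder_{w,u}(m_1, m_2)\vert \leq g$. Since $m_1, m_2 \in \rpoDom_{w,u}$ and runs are well-separated by Lemma~\ref{ryhx6x6c6}, this induces a decomposition
\[
w[m_1, m_2] = A_0\, R_1\, A_1\, R_2 \cdots R_s\, A_s,
\]
where the $A_l$ are gap factors (each nonempty by Lemma~\ref{ryhx6x6c6}) and each $R_l = z_l^{d_l}$ is a run factor, with $z_l \in \PowFactor(u) \cap \Alpha^{\vert u\vert}$ and $d_l \in \mathbb{Q}_1$ coming from $\factrz(w,u)$. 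Because $\reduce_{w,u,\phi}$ alters only the exponents of the run factors and leaves the gap factors untouched, the corresponding factor in the reduced word takes the analogous form
\[
\reduce_{w,u,\phi}[\overline m_1, \overline m_2] = A_0\, \overline R_1\, A_1 \cdots \overline R_s\, A_s, \qquad \text{where } \overline R_l = z_l^{\phi(d_l)}.
\]

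The key observation is that each $\overline R_l$ is a prefix of $R_l$: the condition $d_l - \phi(d_l) \in \mathbb{N}_0$ forces the two rational exponents to share the same fractional part, and $\phi(d_l) \leq d_l$, so directly from the definition of a $q$-power it follows that $z_l^{\phi(d_l)}$ is a prefix of $z_l^{d_l}$. I would then group the pieces of the reduced factor into the $s+1$ chunks $\overline P_l = A_l \overline R_{l+1}$ for $0 \leq l < s$ and $\overline P_s = A_s$. By the prefix observation, each $\overline P_l$ with $l < s$ is a prefix of $A_l R_{l+1}$, which is a factor of $w[m_1, m_2]$; and $\overline P_s = A_s$ is also a factor of $w[m_1, m_2]$. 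Hence $\PL(\overline P_l) \leq \maxPL(w[m_1, m_2])$ for every $l$, and applying Lemma~\ref{dujf720bxg} inductively to the concatenation $\overline P_0 \overline P_1 \cdots \overline P_s = \reduce_{w,u,\phi}[\overline m_1, \overline m_2]$ yields the desired bound $(s+1)\maxPL(w[m_1, m_2]) \leq (g+1)\maxPL(w[m_1, m_2])$.

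The only technical point is verifying the alignment of the two displayed decompositions, namely that the gap factors $A_l$ appearing in $w[m_1, m_2]$ are literally the same as those appearing in its reduced image. This is immediate from the construction of $\reduce_{w,u,\phi}$ together with the fact that $m_1, m_2 \in \rpoDom_{w,u}$ forces the endpoints to lie strictly inside gap regions, so that $A_0$ and $A_s$ are well-defined factors rather than sub-pieces of runs. Notably, no palindromic factorization of $w[m_1, m_2]$ is needed: the chunks are bounded by the global invariant $\maxPL(w[m_1, m_2])$ directly, which is what gives the clean multiplicative bound $(g+1)\maxPL(w[m_1, m_2])$ rather than an additive one.
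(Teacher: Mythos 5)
Your proposal is correct and follows essentially the same route as the paper: both decompose the factor into at most $g+1$ chunks, each consisting of a gap factor followed by a reduced run, use that $d_l-\phi(d_l)\in\mathbb{N}_0$ makes $z_l^{\phi(d_l)}$ a prefix of $z_l^{d_l}$ so that each chunk is a factor of $w[m_1,m_2]$, and conclude by the subadditivity of $\PL$ from Lemma~\ref{dujf720bxg}. No substantive difference from the paper's argument.
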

\begin{proof}
The lemma is obvious for $\vert \RunBorder_{w,u}(m_1,m_2)\vert=0$, because then there is $j\in\mathbb{N}_1$ such that $w[m_1,m_2]\in\Factor(w_j)$ and consequently $w[m_1,m_2]=\reduce_{w,u,\phi}[\overline m_1,\overline m_2]$.

Suppose $\vert \RunBorder_{w,u}(m_1,m_2)\vert\geq 1$.
Since $\vert \RunBorder_{w,u}(m_1,m_2)\vert\leq g$ and $m_1\leq m_2\in\rpoDom_{w,u}$, there are $j,k\in\mathbb{N}_1$ such that 
\begin{itemize}
\item $w[m_1,m_2]=\widehat w_jz_j^{d_j}w_{j+1}z_{j+1}^{d_{j+1}}\dots \widehat w_k$,
\item $\widehat w_j\in\Suffix(w_j)\setminus\{\epsilon\}$, 
\item $\widehat w_k\in\Prefix(w_k)\setminus\{\epsilon\}$, and
\item $k-j\leq g$.
\end{itemize}

Proposition \ref{tu7bdj80kj} implies that $\reduce_{w,u,\phi}[\overline m_1,\overline m_2]=\widehat w_jz_j^{\phi(d_j)}w_{j+1}z_{\phi(j+1)}^{\phi(d_{j+1})}\dots \widehat w_k$. Since $d_i-\phi(d_i)\in\mathbb{N}_0$ we have that $d_i\geq\phi(d_i)$ and $w_iz_i^{\phi(d_i)}\in\Prefix(w_iz_i^{d_i})$. Obviously $\PL(v)\leq \maxPL(w[m_1,m_2])$ for every factor $v\in\Factor(w_iz_i^{\phi(d_i)})$.

The proposition then follows from Lemma \ref{dujf720bxg}. This ends the proof. 
\end{proof}

\section{Standard Palindromes}
Let \[\begin{split}\StdPal_{w,u}=\{(i,j)\mid i\leq j\in\rpoDom_{w,u}\mbox{ and } w[i-1,j+1]\in\Pal\mbox{ and }\\ \{i-1,i,i+1,i+2, \dots, \min\{i+\vert u\vert-1, j\}\}\subseteq \rpoDom_{w,u} \mbox{ and }\\ \{j+1,j,j-1,j-2, \dots,\max\{j-\vert u\vert +1 ,i\}\}\subseteq \rpoDom_{w,u} \mbox{ and } \\ 
\mbox{ for every }m\in\{i,i+1, \dots,j\}\mbox{ we have that }\\
m\in\rpoDom_{w,u}\mbox{ if and only if }\mirror(i,m,j)\in\rpoDom_{w,u}\}
\mbox{.}\end{split}\]
The elements of the set $\StdPal_{w,u}$ are called \emph{standard palindromes} of $w$. 
The key property of a standard palindrome of $w$ is that its image in the reduced word is also a palindrome.
\begin{proposition}
\label{jt77gjri29k}
If $i_1\leq i_2\in\mathbb{N}_1$ and $(i_1,i_2)\in\StdPal_{w,u}$  then 
\[\reduce_{w,u,\phi}[\rpo_{w,u,\phi}(i_1), \rpo_{w,u,\phi}(i_2)]\in\Pal\mbox{.}\]
\end{proposition}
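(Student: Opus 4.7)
The plan is to decompose $w[i_1,i_2]$ and its image $\overline w[\rpo_{w,u,\phi}(i_1),\rpo_{w,u,\phi}(i_2)]$ in $\overline w=\reduce_{w,u,\phi}$ into alternating non-run and run segments, and to show that the palindromic symmetry of $w[i_1-1,i_2+1]$ is inherited by the reduced factor block by block.

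First I would observe that $w[i_1,i_2]$ is itself a palindrome (obtained by stripping the outer letters from $w[i_1-1,i_2+1]$), with the same center $C$. Let $(j_1,j_2)\in\RunBorder_{w,u}$ be any $u$-run contained in $[i_1,i_2]$. The buffer conditions in the definition of $\StdPal_{w,u}$ force $j_1\geq i_1+\vert u\vert$ and $j_2\leq i_2-\vert u\vert$, so Proposition \ref{vnm87sm26g} applied to the palindrome $w[i_1-1,i_2+1]$ yields that the mirror of $(j_1,j_2)$ around $C$ is again a $u$-run contained in $[i_1,i_2]$. Combined with Lemma \ref{ryhx6x6c6} and the mirror-invariance of $\rpoDom_{w,u}$ built into $\StdPal_{w,u}$, this gives an involution on the runs inside $[i_1,i_2]$ whose orbits are either a pair of distinct runs or a self-paired run.

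Next I would exploit the pairing. For two distinct runs paired by the mirror, with contents $z_l^{d_l}$ and $z_{l'}^{d_{l'}}$, the palindromic identity forces $z_{l'}^{d_{l'}}=(z_l^{d_l})^R$, whence $\vert z_{l'}\vert=\vert z_l\vert$, $z_{l'}=z_l^R$, and $d_{l'}=d_l$. Lemma \ref{fyue7dyiiu7d} then yields $z_{l'}^{\phi(d_{l'})}=(z_l^{\phi(d_l)})^R$, so the reduced run contents remain mirror images. For a self-paired run, $z_l^{d_l}$ is already a palindrome, and applying Lemma \ref{fyue7dyiiu7d} with $v=t=z_l$ shows that $z_l^{\phi(d_l)}$ is a palindrome as well. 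The non-run segments in between are preserved verbatim by $\rpo_{w,u,\phi}$ and pair up analogously under the reflection of $w[i_1,i_2]$.

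Finally I would assemble these block-wise symmetries using Proposition \ref{tu7bdj80kj}: the factorization of $\overline w$ matches that of $w$ with every $z_l^{d_l}$ replaced by $z_l^{\phi(d_l)}$ and every $w_k$ preserved, so the reflection symmetry of $w[i_1,i_2]$ lifts position-wise to $\overline w[\rpo_{w,u,\phi}(i_1),\rpo_{w,u,\phi}(i_2)]$. The anticipated obstacle is mostly bookkeeping: one must verify that the pairing non-run-with-non-run and run-with-mirror-run exhausts every position of the reduced factor, so that reversing it returns exactly the same word. The mirror-invariance clause in $\StdPal_{w,u}$ together with the involution on runs above ensures that this accounting is complete, at which point the palindromic conclusion is immediate.
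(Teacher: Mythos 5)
Your proposal is correct and follows essentially the same route as the paper: both decompose $w[i_1,i_2]$ into the alternating blocks $\widehat w_j z_j^{d_j}w_{j+1}\cdots\widehat w_k$, use the buffer and mirror-invariance clauses of $\StdPal_{w,u}$ together with Proposition \ref{vnm87sm26g} to pair each run with its mirror run, apply Lemma \ref{fyue7dyiiu7d} to transfer the reversal relation $z_{j+n}^{d_{j+n}}=(z_{k-1-n}^{d_{k-1-n}})^R$ to the reduced exponents, and assemble via Proposition \ref{tu7bdj80kj}. Your explicit treatment of the self-paired central run is a case the paper handles only implicitly through its indexing, but it is the same argument.
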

\begin{proof}
Since $(i_1,i_2)\in\StdPal_{w,u}$, we have that $i_1,i_2\in\rpoDom_{w,u}$. Thus let 
$\overline i_1=\rpo_{w,u,\phi}(i_1)$ and let $\overline i_2=\rpo_{w,u,\phi}(i_2)$.

If $\RunBorder_{w,u}(i_1,i_2)=\emptyset$ then it is easy to see that $\reduce_{w,u,\phi}[\overline i_1,\overline i_2]=w[i_1,i_2]$ and consequently $\reduce_{w,u,\phi}[\overline i_1,\overline i_2]\in\Pal$. 

Let us suppose that $\RunBorder_{w,u}(i_1,i_2)\not=\emptyset$.
Then there are $j<k\in\mathbb{N}_1$ such that 
$\widehat w_j\in\Suffix(w_j)\setminus\{\epsilon\}$, 
$\widehat w_k\in\Prefix(w_k)\setminus\{\epsilon\}$, 
\begin{equation}\label{jdud837ehj}w[i_1,i_2]=\widehat w_{j}z_{j}^{d_j}w_{j+1}z_{j+1}^{d_{j+1}}\dots w_{k-1}z_{k-1}^{d_{k-1}}\widehat w_k\mbox{, and }\end{equation}
\begin{equation}\label{bbnc829efg}\reduce_{w,u,\phi}[\overline i_1,\overline i_2]=\widehat w_{j}z_{j}^{\phi(d_j)}w_{j+1}z_{j+1}^{\phi(d_{j+1})}\dots w_{k-1}z_{k-1}^{\phi(d_{k-1})}\widehat w_k\mbox{.}\end{equation}

Let $(m_1,m_2)\in\RunBorder_{w,u}(i_1,i_2)$. From the definition of $\StdPal_{w,u}$ we get that
\begin{itemize}
\item $w[i_1-1,i_2+1]\in\Pal$, 
\item $\{i_1-1, i_1,i_1+1,i_1+2,\dots \dots, i_1+\vert u\vert -1 \}\subseteq\rpoDom_{w,u}$, and
\item $\{i_2+1, i_2,i_2-1,i_2-2, \dots, i_2-\vert u\vert +1\}\subseteq\rpoDom_{w,u}$.
\end{itemize}
Hence Proposition \ref{vnm87sm26g} implies that $\mirror(i_1,m_1,m_2,i_2)\in\RunBorder_{w,u}(i_1,i_2)$.
In consequence it follows from Proposition \ref{tu7bdj80kj} and  (\ref{jdud837ehj})  that 
\begin{itemize}\item $\widehat w_{j}=(\widehat w_k)^R$, 
\item $w_{j+n}=(w_{k-n})^R$ for every $n\in\mathbb{N}_1$ such that $j+n<k$, and
\item $z_{j+n}^{d_{j+n}}=(z_{k-1-n}^{d_{k-1-n}})^R$ for every $n\in\mathbb{N}_0$ such that $j+n<k$.
\end{itemize}
Lemma \ref{fyue7dyiiu7d} implies that $z_{j+n}^{\phi(d_{j+n})}=(z_{k-1-n}^{\phi(d_{k-1-n})})^R$ for every $n\in\mathbb{N}_0$ such that $j+n<k$.

Thus from (\ref{jdud837ehj}) and (\ref{bbnc829efg}) we get that $w[i_1,i_1]\in \Pal$ implies $\reduce_{w,u,\phi}[\overline i_1,\overline i_2]\in\Pal$.
This completes the proof.
\end{proof}

Given $i\leq j\in\mathbb{N}_1$, let 
\[\begin{split}\CnStdPal_{w,u}(i,j)=
\{(m_1,m_2)\in\StdPal_{w,u}\mid i\leq m_1\leq m_2\leq j\mbox{ and } \\ m_1-i=j-m_2 \}\mbox{ } \end{split}\mbox{.}\]
We call the elements of $\CnStdPal_{w,u}(i,j)$ \emph{centered standard palindromes} of $w[i,j]$. 

The next lemma shows under which condition a palindrome $w[i,j]$ contains a centered standard palindrome.
\begin{lemma}
\label{ufj77cbdjh22}
If $i\leq j\in\mathbb{N}_1$, $w[i,j]\in\Pal$, $(m_1,m_2)\in\RunBorder_{w,u}(i,j)$, $i+\vert u\vert<m_1$, $m_2+\vert u\vert<j$, $(m_3,m_4)=\mirror(i,m_1,m_2,j)$, $k=\min\{m_1,m_3\}$, and $\overline k=\mirror(i,k,j)$ then $(k-\vert u\vert,\overline k+\vert u\vert)\in\CnStdPal_{w,u}(i,j)$.
\end{lemma}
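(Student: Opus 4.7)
The plan is to verify each clause of the definition of $\CnStdPal_{w,u}(i,j)$ with $i' := k - \vert u\vert$ and $j' := \overline k + \vert u\vert$. First, by Proposition \ref{vnm87sm26g}, $(m_3, m_4) \in \RunBorder_{w,u}$. A short case split on $m_1 \leq m_3$ vs.\ $m_1 > m_3$ shows that in both cases $k$ is the smaller of $m_1, m_3$ and $\overline k$ is the larger of $m_2, m_4$; using $m_3 = i + j - m_2$ and $m_4 = i + j - m_1$ together with the hypotheses $i + \vert u\vert < m_1$ and $m_2 + \vert u\vert < j$, this gives $k > i + \vert u\vert$ and $\overline k < j - \vert u\vert$. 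In particular $i < i' \leq j' < j$, and the centering identity $i' - i = j - j'$ follows directly from $\overline k = i + j - k$.

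Second, since $i' + j' = k + \overline k = i + j$, the factor $w[i' - 1, j' + 1]$ is a centered subfactor of the palindrome $w[i,j]$ and hence is itself a palindrome. Third, I check the two boundary-in-$\rpoDom$ clauses. Because $k \in \{m_1, m_3\}$ is the left endpoint of a $u$-run, Lemma \ref{ryhx6x6c6} applied to the previous run (if any) shows that every $u$-run ending before $k$ ends at position at most $k - \vert u\vert - 2$; hence the $\vert u\vert + 1$ positions $k - \vert u\vert - 1, k - \vert u\vert, \ldots, k - 1$ all lie in $\rpoDom_{w,u}$. The symmetric argument using $\overline k \in \{m_2, m_4\}$ handles the right boundary $\overline k + 1, \ldots, \overline k + \vert u\vert + 1$.

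The main obstacle is the final clause: for every $m \in \{i', \ldots, j'\}$, $m \in \rpoDom_{w,u}$ iff $\mirror(i', m, j') \in \rpoDom_{w,u}$. The crucial observation is that $i' + j' = i + j$ makes $\mirror(i', m, j')$ and $\mirror(i, m, j)$ identical, so I may work with the outer palindrome. The plan is: assume $m \in [i', j']$ lies in a run $(p, q)$; the boundary analysis from step three shows that the $\vert u\vert + 1$ positions immediately to the left of $k$ and immediately to the right of $\overline k$ are in $\rpoDom_{w,u}$, so the run containing $m$ cannot straddle these gaps and must satisfy $k \leq p \leq q \leq \overline k$. Then $i + \vert u\vert < k \leq p$ and $q \leq \overline k < j - \vert u\vert$, so Proposition \ref{vnm87sm26g} applies to $(p, q)$ and produces the mirror run $\mirror(i, p, q, j)$, which covers $\mirror(i, m, j)$. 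The reverse implication is symmetric.

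Putting these four pieces together shows $(i', j') \in \StdPal_{w,u}$, and the centering identity established in paragraph one places $(i', j')$ in $\CnStdPal_{w,u}(i, j)$ as required. The delicate point is the bookkeeping in the last paragraph — ensuring that any run meeting $[i', j']$ is entirely contained in $[k, \overline k]$ so that Proposition \ref{vnm87sm26g} is legitimately applicable.
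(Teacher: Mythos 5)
Your proof is correct and follows essentially the same route as the paper's: Proposition \ref{vnm87sm26g} supplies the mirror run $(m_3,m_4)$ and the run/position symmetry, while Lemma \ref{ryhx6x6c6} yields the $\vert u\vert+1$ run-free positions flanking $k$ and $\overline k$. You simply spell out two points the paper leaves implicit — that $w[k-\vert u\vert-1,\overline k+\vert u\vert+1]$ is a palindrome as a centered subfactor, and that any run meeting $[k-\vert u\vert,\overline k+\vert u\vert]$ lies inside $[k,\overline k]$ so that Proposition \ref{vnm87sm26g} is legitimately applicable — which is a welcome tightening rather than a different argument.
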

\begin{proof}
Proposition \ref{vnm87sm26g} implies that for every \[n\in\{k-\vert u\vert-1, k-\vert u\vert, k-\vert u\vert +1, \dots, \overline k+\vert u\vert+1\}\] we have that $n\in\rpoDom_{w,u}$ if and only if $\mirror(i,n,j)\in\rpoDom_{w,u}$. It follows that $(m_3,m_4)\in\RunBorder_{w,u}$.
Since $(m_1,m_2), (m_3,m_4)\in\RunBorder_{w,u}(i,j)$, Lemma $\ref{ryhx6x6c6}$ implies that 
$\{k-1, k-2, \dots, k-\vert u\vert -1\}\subseteq \rpoDom_{w,u}$, and $\{\overline k +1, \overline k+2, \dots, \overline k+\vert u\vert +1\}\subseteq \rpoDom_{w,u}$.

Thus $(k-\vert u\vert,\overline k+\vert u\vert)\in\StdPal_{w,u}$. Since $k-i=j-\overline k$, we have also that  $(k-\vert u\vert,\overline k+\vert u\vert)\in\CnStdPal_{w,u}(i,j)$. This completes the proof.
\end{proof}

The following lemma proves that if the palindrome $w[i,j]$ contains at least three runs, then it contains also a centered standard palindrome.
\begin{lemma}
\label{uj827xzv3}
If $(i,j)\in\widehat\Upsilon$ then 
$\vert \RunBorder_{w,u}(i,j)\vert\leq 2$.
\end{lemma}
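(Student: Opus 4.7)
The plan is a direct contradiction argument using the spacing between consecutive runs (Lemma \ref{ryhx6x6c6}) and Lemma \ref{ufj77cbdjh22}. Reading $\widehat\Upsilon$ as the set of positions $(i,j)$ with $w[i,j]\in\Pal$ and $\CnStdPal_{w,u}(i,j)=\emptyset$—the natural interpretation given the informal preamble to the lemma—it suffices to show that any palindrome $w[i,j]$ containing three or more runs of $\RunBorder_{w,u}$ must contain a centered standard palindrome. So I will assume $\vert\RunBorder_{w,u}(i,j)\vert\geq 3$ and exhibit an element of $\CnStdPal_{w,u}(i,j)$.

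Pick the three smallest runs $(a_1,b_1)<(a_2,b_2)<(a_3,b_3)$ of $\RunBorder_{w,u}(i,j)$ under the order on $\RunBorder_{w,u}$. The key observation is that the middle run $(a_2,b_2)$ satisfies both interior conditions of Lemma \ref{ufj77cbdjh22}, namely $i+\vert u\vert<a_2$ and $b_2+\vert u\vert<j$. For the first: Lemma \ref{ryhx6x6c6} applied to $(a_1,b_1)$ and $(a_2,b_2)$ gives $b_1+\vert u\vert+1<a_2$; since $(a_1,b_1)\in\RunBorder_{w,u}(i,j)$ we have $i\leq a_1$, and the definition of $\RunBorder_{w,u}$ forces $a_1<b_1$, so $i+\vert u\vert+1\leq b_1+\vert u\vert+1<a_2$, whence $i+\vert u\vert<a_2$. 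The symmetric argument applied to $(a_2,b_2)$ and $(a_3,b_3)$ yields $b_2+\vert u\vert+1<a_3\leq b_3\leq j$, hence $b_2+\vert u\vert<j$.

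With both interior inequalities established, Lemma \ref{ufj77cbdjh22} applies to the palindrome $w[i,j]$ together with the run $(a_2,b_2)$, producing an element of $\CnStdPal_{w,u}(i,j)$. This contradicts $(i,j)\in\widehat\Upsilon$, and therefore $\vert\RunBorder_{w,u}(i,j)\vert\leq 2$.

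The main obstacle is essentially trivial: the proof is index bookkeeping plus one invocation of Lemma \ref{ufj77cbdjh22}. The only conceptual point is recognizing that the middle of three consecutive runs is the correct object to feed into that lemma—each of the two flanking runs, by virtue of the length bound $(\gamma-2)\vert u\vert$ with $\gamma\geq 3$, already provides the $\vert u\vert$-wide buffer separating $(a_2,b_2)$ from the endpoints $i$ and $j$ of the palindrome.
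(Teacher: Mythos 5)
Your proof is correct and follows essentially the same route as the paper's: take three runs in the palindrome $w[i,j]$, use Lemma \ref{ryhx6x6c6} to show the middle run is separated from both endpoints by more than $\vert u\vert$ positions, and then invoke Lemma \ref{ufj77cbdjh22} on that middle run to produce an element of $\CnStdPal_{w,u}(i,j)$, contradicting $(i,j)\in\widehat\Upsilon_{w,u}$. Your write-up is in fact slightly more explicit than the paper's about why both interior inequalities $i+\vert u\vert<a_2$ and $b_2+\vert u\vert<j$ hold.
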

\begin{proof}
Let $(i_1,i_2), (i_3,i_4), (i_5,i_6)\in\RunBorder_{w,u}(i,j)$ with $i_1<i_3<i_5$. 

Let $(m_1,m_2)=\mirror(i,i_3,i_4,j)$. Let $k=\min\{m_1,i_3\}$ and let $\overline k=\mirror(i,k,j)$. Lemma \ref{ryhx6x6c6} implies that $i_2+\vert u\vert +1<i_3$ and hence Lemma \ref{ufj77cbdjh22} implies that $(k-\vert u\vert, \overline k+\vert u\vert)\in\CnStdPal_{w,u}(i,j)$. We proved that if $\vert \RunBorder_{w,u}(i,j)\vert\geq 3$ then $\CnStdPal_{w,u}(i,j)\not=\emptyset$. The lemma follows.
\end{proof}

We define an order on the set $\CnStdPal_{w,u}(i,j)$ as follows: If $(i_1,j_1), (i_2,j_2)\in\CnStdPal_{w,u}(i,j)$ and $(i_1,j_1)\not=(i_2,j_2)$ then $(i_1,j_1)<(i_2,j_2)$ if and only if $j_1-i_1<j_2-i_2$. Thus the function $\max\{\CnStdPal_{w,u}(i,j)\}$ is well defined on condition that $\CnStdPal_{w,u}(i,j)\not=\emptyset$. Let $\maxCSP_{w,u}(i,j)=\max\{\CnStdPal_{w,u}(i,j)\}$.
\begin{remark}
Since the elements of $\CnStdPal_{w,u}(i,j)$ are ``centered'', the inequality $j_1-i_1<j_2-i_2$ is equivalent to the inequalities $j_1<j_2$ and $i_1>i_2$.
\end{remark}

To simplify our next results, we introduce three auxiliary sets and a function $\overlap:\mathbb{N}_1^2\times\mathbb{N}_1^2\rightarrow \{0,1\}$.
Let  \[\begin{split}\widehat \Upsilon_{w,u}=\{(i,j)\mid i\leq j\in\mathbb{N}_1\mbox{ and }w[i,j]\in\Pal\mbox{ and }\\ \CnStdPal_{w,u}(i,j)=\emptyset\}\mbox{.}\end{split}\]

Given $i_1\leq j_1\in\mathbb{N}_1$ and $i_2\leq j_2\in\mathbb{N}_1$, let 
\[
  \overlap((i_1,j_1), (i_2,j_2)) =
  \begin{cases}
    0 & \text{if $i_1\leq i_2\leq j_1$} \\
    0 & \text{if $i_2\leq i_1\leq j_2$} \\
    1 & \text{otherwise.}
  \end{cases}
\]

Let  \[\begin{split}\overline \Upsilon_{w,u}=\{(i,j)\mid i\leq j\in\mathbb{N}_1\mbox{ and there are }\overline i\leq \overline j\in\mathbb{N}_1\mbox{ such that }\\ \overline i\leq i\leq j\leq \overline j\mbox{ and }w[\overline i, \overline j]\in\Pal\mbox{ and }\\ \CnStdPal_{w,u}(\overline i, \overline j)\not=\emptyset\mbox{ and }\overlap((i,j),\maxCSP_{w,u}(\overline i, \overline j))=0\}\mbox{.}\end{split}\]

Let $\Upsilon_{w,u}=\widehat \Upsilon_{w,u}\cup _{w,u}\overline \Upsilon_{w,u}$.

Less formally said, $\overline\Upsilon$ contains positions $(i,j)$ of factors of a palindrome $w[\overline i,\overline j]$ that are between the border of the palindrome $w[\overline i,\overline j]$ and the border of its maximal centered standard palindrome. The next proposition shows that these factors contain at most one run.
\begin{proposition}
\label{ryy8hu8cxvxc}
If $(n_1,n_2)\in\overline\Upsilon_{w,u}$ then  $\vert \RunBorder_{w,u}(n_1,n_2)\vert\leq 1$.
\end{proposition}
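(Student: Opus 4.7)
The plan is to argue by contradiction. Suppose $(n_1,n_2)\in\overline\Upsilon_{w,u}$ but $\vert\RunBorder_{w,u}(n_1,n_2)\vert\geq 2$, and pick two distinct runs $R_1=(a_1,b_1)$ and $R_2=(a_2,b_2)$ in $\RunBorder_{w,u}(n_1,n_2)$ with $a_1<a_2$. Fix witnesses $\overline i\leq\overline j\in\mathbb{N}_1$ from the definition of $\overline\Upsilon_{w,u}$, and set $(s_1,s_2)=\maxCSP_{w,u}(\overline i,\overline j)$. As the paragraph preceding the proposition explains, $[n_1,n_2]$ sits entirely in one of the two gaps $[\overline i,s_1-1]$ or $[s_2+1,\overline j]$ between the boundaries of $w[\overline i,\overline j]$ and those of $(s_1,s_2)$; by the left--right symmetry of the palindrome $w[\overline i,\overline j]$ I may assume the former, so $\overline i\leq a_1<a_2\leq b_2\leq s_1-1$.

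The key step is to apply Lemma \ref{ufj77cbdjh22} to the palindrome $w[\overline i,\overline j]$ together with the run $R_2$; this will yield a centered standard palindrome of $w[\overline i,\overline j]$ strictly larger than $(s_1,s_2)$, contradicting maximality. The non-trivial part is verifying the two buffer hypotheses $\overline i+\vert u\vert<a_2$ and $b_2+\vert u\vert<\overline j$. The first follows from the presence of $R_1$ to the left of $R_2$: Lemma \ref{ryhx6x6c6} gives $a_2>b_1+\vert u\vert+1\geq\overline i+\vert u\vert+1$. For the second, I will use the centering identity $\overline j=s_1+s_2-\overline i$ (equivalent to $s_1-\overline i=\overline j-s_2$), which together with $b_2\leq s_1-1$ yields $\overline j-b_2\geq s_2-\overline i+1$; the standard-palindrome condition $s_1-1\in\rpoDom_{w,u}$ together with $b_2\in\widetilde\RunBorder_{w,u}$ forces $s_1\geq b_2+2$, and combined with the already established $a_2\geq\overline i+\vert u\vert+2$ this gives $s_2\geq s_1\geq\overline i+\vert u\vert+4$, so $\overline j-b_2\geq\vert u\vert+5>\vert u\vert$.

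With the hypotheses in place, Lemma \ref{ufj77cbdjh22} delivers a centered standard palindrome of the form $(a_2-\vert u\vert,\,\overline k+\vert u\vert)$, where $\mirror(\overline i,b_2,\overline j)=s_1+s_2-b_2>s_2\geq a_2$ forces $k=a_2$ and hence $\overline k=s_1+s_2-a_2$. Its length $s_1+s_2-2a_2+2\vert u\vert+1$ exceeds the length $s_2-s_1+1$ of $(s_1,s_2)$ by $2(s_1-a_2+\vert u\vert)>0$, contradicting the maximality of $\maxCSP_{w,u}(\overline i,\overline j)$ in the size order on $\CnStdPal_{w,u}(\overline i,\overline j)$. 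I expect the principal obstacle to be the verification of $b_2+\vert u\vert<\overline j$, which requires combining the centering identity with the $\rpoDom$-structure of $(s_1,s_2)$ coming from the definition of $\StdPal_{w,u}$ and with the $\vert u\vert$-separation between $R_1$ and $R_2$ guaranteed by Lemma \ref{ryhx6x6c6}.
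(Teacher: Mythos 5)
Your proof is correct and follows essentially the same route as the paper's: both derive a contradiction with the maximality of $\maxCSP_{w,u}(\overline i,\overline j)$ by applying Lemma \ref{ufj77cbdjh22} to the rightmost of two hypothetical runs to produce a strictly larger centered standard palindrome. The only difference is that you verify the buffer hypotheses $\overline i+\vert u\vert<a_2$ and $b_2+\vert u\vert<\overline j$ explicitly (using the centering identity and the $\rpoDom_{w,u}$ conditions in the definition of $\StdPal_{w,u}$), which the paper leaves implicit.
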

\begin{proof}
Let $i\leq j\in\mathbb{N}_1$ be such that $i\leq n_1\leq n_2\leq j$, $w[i,j]\in\Pal$,  $\maxCSP_{w,u}(i,j)\not=\emptyset$, and \begin{equation}\label{gnhd8d787}\overlap((n_1,n_2),\maxCSP_{w,u}(i,j))=0\mbox{.}\end{equation} 
Since $(n_1,n_2)\in\overline\Upsilon_{w,u}$, we know that such $i,j$ exist; however note that $i,j$ are not uniquely determined.

Let $(m_1,m_2)=\maxCSP_{w,u}(i,j)$. It follows from (\ref{gnhd8d787}) that \[i\leq n_1\leq n_2<m_1\quad\mbox{ or }\quad m_2<n_1\leq n_2\leq j\mbox{.}\]
Without loss of generality suppose that $i\leq n_1\leq n_2<m_1$.

Suppose that there are $(i_1,i_2), (i_3,i_4)\in\RunBorder_{w,u}(i,m_1)$ with $i_1<i_3$. Lemma \ref{ryhx6x6c6} implies that $i_2+\vert u\vert +1<i_3$ and hence Lemma \ref{ufj77cbdjh22} implies that \[(i_3-\vert u\vert, \mirror(i,i_3,j)+\vert u\vert)\in\CnStdPal_{w,u}(n_1,n_2)\mbox{.}\] This contradicts to $(m_1,m_2)=\maxCSP_{w,u}(i,j)$.
We conclude that $\vert\RunBorder_{w,u}(i,m_1)\vert\leq 1$.

This completes the proof.
\end{proof}

\section{Palindromic factorization}
Let $n_1\leq n_2\in\mathbb{N}_1$, $k\in\mathbb{N}_1$, and let \[\begin{split}\PalFactrz_{w,k}(n_1,n_2)=\{(m_1,m_2,\dots,m_j)\mid j\leq k\mbox{ and } \\ m_1\leq m_2\leq \dots \leq m_j\in\mathbb{N}_1\mbox{ and }\\ n_1=m_1\mbox{ and }n_2=m_j-1\mbox{ and }\\ w[m_i,m_{i+1}-1]\in\Pal\mbox{ for } i\in\{1,2,\dots,j-1\} \mbox{ and }\\ w[n_1,n_2]=w[m_1,m_2-1]w[m_2,m_3-1]\dots w[m_{j-1},m_j-1]\}\mbox{.}\end{split}\]
We call the elements of $\PalFactrz_{w,k}(n_1,n_2)$ \emph{palindromic factorizations} of the factor $w[n_1,n_2]$.

Let $n_1\leq n_2\in\mathbb{N}_1$, $k\in\mathbb{N}_1$, and
let \[\begin{split}\StdPalFactrz_{w,u,k}(n_1,n_2)=\{(\delta_1,\delta_2,\dots,\delta_g)\mid g\leq k\mbox{ and }\\ \delta_1\leq \delta_2\leq \dots\leq\delta_g\in\mathbb{N}_1\mbox{ and }\\
n_1=\delta_1\mbox{ and }n_2=\delta_g-1\mbox{ and }\\ 
w[n_1,n_2]=w[\delta_1, \delta_2-1]w[\delta_2,\delta_3-1]\cdots w[\delta_{g-1}, \delta_g-1]\mbox{ and }\\
\mbox{ for every }i\in\{1,2,\dots, g-1\}\mbox{ we have that }\delta_i, \delta_{i+1}-1\in\rpoDom_{w,u}\mbox{ and }\\
\mbox {if }(\delta_i,\delta_{i+1}-1)\not\in\StdPal_{w,u}\mbox{ then  }\vert\RunBorder_{w,u}(\delta_i,\delta_{i+1}-1)\vert\leq 3k
\}\mbox{.}\end{split}\]
We call the elements of $\StdPalFactrz_{w,k}(n_1,n_2)$ \emph{standard palindromic factorizations} of the factor $w[n_1,n_2]$. Note that $w[\delta_i,\delta_{i+1}-1]$ is either a standard palindrome or $w[\delta_i,\delta_{i+1}-1]$ has a bounded number of runs; in the latter case the factor $w[\delta_i,\delta_{i+1}-1]$ is not necessarily a palindrome. Also note that the border positions $\delta_i, \delta_{i+1}-1$ are not covered by runs; i.e. $\delta_i, \delta_{i+1}-1\in\rpoDom_{w,u}$. As such the images $\rpo_{w,u,\phi}(\delta_i), \rpo_{w,u,\phi(\delta_{i+1}-1)}$ are well defined.

We show that if there is a palindromic factorization, then there is also a standard palindromic factorization.

\begin{proposition}
\label{ue7dufyi89ie}
If $n_1\leq n_2\in\rpoDom_{w,u}$, $k=1+\maxPL(w[n_1,n_2])$ then $\StdPalFactrz_{w,u,k}(n_1,n_2)\not=\emptyset$.
\end{proposition}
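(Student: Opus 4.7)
The plan is to start from a minimal palindromic factorization $w[n_1, n_2] = p_1 p_2 \cdots p_\ell$ with $\ell = \PL(w[n_1, n_2]) \leq \maxPL(w[n_1, n_2]) = k - 1$, writing $p_i = w[a_i, b_i]$, and to transform it into a standard palindromic factorization with essentially one piece per palindrome. I would first classify each $p_i$ as \emph{light}, when the number of $u$-runs it contains can fit inside the $3k$ budget after a bounded boundary shift, or \emph{heavy} otherwise. By Lemma~\ref{uj827xzv3}, every heavy palindrome contains a centered standard palindrome, so $(m_1^i, m_2^i) = \maxCSP_{w,u}(a_i, b_i)$ is well defined; by Proposition~\ref{ryy8hu8cxvxc}, the two residues $w[a_i, m_1^i - 1]$ and $w[m_2^i + 1, b_i]$ each contain at most one $u$-run.

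Next I would construct $(\delta_1, \ldots, \delta_g)$ by sweeping left to right through $p_1, \ldots, p_\ell$. At a light palindrome I would emit a single non-standard piece coinciding with $p_i$ up to a local boundary shift into $\rpoDom_{w,u}$; since a shift across at most $\vert u\vert$ positions changes the run count only by a constant, the piece still has $\leq 3k$ $u$-runs. At a heavy palindrome I would emit $(m_1^i, m_2^i) \in \StdPal_{w,u}$ as its own standard-palindrome piece; its endpoints automatically satisfy the required $\rpoDom_{w,u}$ conditions on $\delta_i$ and $\delta_i - 1$ because the definition of $\StdPal_{w,u}$ carries a $\vert u\vert$-buffer. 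The residues of a heavy palindrome are absorbed either into an adjacent light piece (adding at most one run, hence staying $\leq 3k$) or, between two consecutive heavy palindromes, into a single bridge piece with at most $2$ runs by Proposition~\ref{ryy8hu8cxvxc}.

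The hard part is the amortised accounting showing $g - 1 \leq k - 1$. A naive scheme that emits a separate SP piece for every heavy palindrome plus a bridge piece between every heavy-heavy adjacency yields up to $2r + 1$ pieces for $r$ heavy palindromes, possibly overshooting the budget. To stay within $k - 1$ I would be selective and only introduce an $(m_1^i, m_2^i)$ as its own piece when absorbing its residues into the neighbouring piece would push the accumulated run count above $3k$; otherwise the surrounding content is merged into a single non-standard piece. Combined with the inequality $\ell \leq k - 1$ and the one-run-per-residue bound, this selective scheme should yield $g \leq k$. The final verification that every $\delta_i$ and $\delta_i - 1$ lies in $\rpoDom_{w,u}$ is immediate at SP boundaries (from the $\vert u\vert$-buffer of $\StdPal_{w,u}$) and forced by construction at the shifted light boundaries.
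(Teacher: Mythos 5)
Your architecture is the same as the paper's: take a minimal palindromic factorization of $w[n_1,n_2]$ into $\ell\leq k-1$ palindromes, extract $\maxCSP_{w,u}$ from each palindrome that admits one (Lemma \ref{uj827xzv3} guarantees existence once a palindrome has three or more runs), bound each residue by one run via Proposition \ref{ryy8hu8cxvxc}, and merge all non--standard-palindrome content into pieces with at most $3k$ runs. Two remarks. The minor one: the ``local boundary shift into $\rpoDom_{w,u}$'' for light palindromes is unnecessary and risky. If you merge everything between two consecutive emitted standard palindromes into one piece, its endpoints are either $n_1,n_2$ (assumed to lie in $\rpoDom_{w,u}$) or positions adjacent to a border of an element of $\StdPal_{w,u}$, which the definition of $\StdPal_{w,u}$ already places in $\rpoDom_{w,u}$; whereas an ad hoc shift moves a boundary that is shared with the neighbouring piece, and if that neighbour is an emitted $(m_1^i,m_2^i)\in\StdPal_{w,u}$ the shift destroys exactly the property you need.

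The substantive gap is the piece count, which you correctly identify as the hard part but do not close. Your selective scheme --- emit $(m_1^i,m_2^i)$ as its own piece only when ``absorbing its residues'' would overflow the $3k$ run budget --- misidentifies what must be absorbed. The residues are never the obstruction: each carries at most one run by Proposition \ref{ryy8hu8cxvxc}. If you decline to emit the centered standard palindrome as a separate piece, the absorbing non-standard piece must swallow the centered standard palindrome itself, and that factor can contain arbitrarily many runs; keeping such factors intact is the entire point of standard palindromes, since only Proposition \ref{jt77gjri29k} handles them without any run bound. Hence every palindrome with more than a couple of runs forces an emitted piece, selectivity buys nothing, and the construction genuinely produces up to $2r+1$ pieces for $r$ heavy palindromes, i.e.\ $g$ up to roughly $2k$ rather than $g\leq k$. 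Your concluding claim that the scheme ``should yield $g\leq k$'' is therefore unsupported. For context, this is also the delicate point of the paper's own proof, which at the corresponding step asserts $g\in\{1,2,\dots,j-1\}$ without an explicit count; the alternating structure it builds actually gives $g\leq 2k$, which would only change the constant in Theorem \ref{rug7r9unusedOK}. But as a proof of the proposition with the stated budget $g\leq k$, your argument has a genuine gap at precisely this step, and the fix you propose cannot work as described.
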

\begin{proof}
Since $k=1+\maxPL(w[n_1,n_2])$, obviously $\PalFactrz_{w,k}(n_1,n_2)\not=\emptyset$.
Let $(m_1,m_2,\dots,m_j)\in\PalFactrz_{w,k}(n_1,n_2)$. We have that \begin{equation}\label{ry6yf68uy}w[n_1,n_2]=w[m_1,m_2-1]w[m_2, m_2-1]\cdots w[m_{j-1}, m_j-1]\mbox{.}\end{equation} 
Let \[\begin{split}\Omega=\{\maxCSP_{w,u}(m_i,m_{i+1}-1)\mid i\in\{1,2,\dots,j-1\}\mbox{ and } \\ \CnStdPal_{w,u}(m_i,m_{i+1}-1)\not=\emptyset\}\mbox{.}\end{split}\]
Let $i\in\{1,2,\dots,j-1\}$.
We distinguish following cases.
\begin{itemize}
\item If $\CnStdPal_{w,u}(m_i,m_{i+1}-1)=\emptyset$ then $(m_i,m_{i+1}-1)\in\widehat\Upsilon_{w,u}$.
\item 
If $\CnStdPal_{w,u}(m_i,m_{i+1}-1)\not=\emptyset$ then let $(\alpha_i,\beta_i)=\maxCSP_{w,u}(m_i,m_{i+1}-1)$. 
\begin{itemize}
\item If $\alpha_i=m_i$ then $w[m_i,m_{i+1}-1]=w[\alpha_i,\beta_i]$ and $(m_i,m_{i+1}-1)\in\StdPal_{w,u}$.
\item If $\alpha_i>m_i$ then \[w[m_i,m_{i+1}-1]=w[m_i,\alpha_i-1]w[\alpha_i,\beta_i]w[\beta_i+1,m_{i+1}-1]\mbox{,}\] 
$(m_i,\alpha_i-1), (\beta_i+1,m_{i+1}-1)\in\overline\Upsilon$, and $(\alpha_i,\beta_i)\in\StdPal_{w,u}$
\end{itemize}
\end{itemize}
From these cases and (\ref{ry6yf68uy}), it follows that there are $g\in\{1,2,\dots ,j-1\}$ and $\delta_1,\delta_2,\dots,\delta_g\in\{n_1, n_1+1, \dots, n_2\}$ 
  such that
\begin{itemize}
\item $n_1=\delta_1$, $n_2=\delta_g-1$, 
\item for every $(\alpha,\beta)\in\Omega$ there is $i\in\{1,2, \dots, g-1\}$ such that $(\alpha,\beta)=(\delta_i,\delta_{i+1}-1)$,
\item $w[n_1,n_2]=w[\delta_1, \delta_2-1]w[\delta_2,\delta_3-1]\cdots w[\delta_{g-1}, \delta_g-1]$, 
\item
if $i\in\{2,3,g-2\}$ and $(\delta_i,\delta_{i+1}-1)\not\in\Omega$ then \begin{equation}\label{ryg76rjt87}(\delta_{i-1}, \delta_i-1), (\delta_{i+1}, \delta_{i+2}-1)\in\Omega\mbox{, }\end{equation}
\item if $i\in\{1,2,3,g-1\}$ and $(\delta_i,\delta_{i+1}-1)\not\in\Omega$
then $(\delta_i,\delta_{i+1}-1)$ is a concatenation of at most $j\leq k$ words from $\Upsilon_{w,u}$.
\end{itemize} 
Note that if $\Omega=\emptyset$ then $g=2$ and $w[n_1,n_2]=[\delta_1,\delta_2-1]\not\in\Omega$.

Realize that $\Omega\subseteq\StdPal_{w,u}$.
From the definition of $\StdPal_{w,u}$ and (\ref{ryg76rjt87}) it follows that $\delta_i, \delta_{i+1}-1\in\rpoDom_{w,u}$ for every $i\in\{1,2,3,g-1\}$. Just recall that if $(i_1,i_2)\in\StdPal_{w,u}$ then $i_1-1,i_1,i_2,i_2+1\in\rpoDom_{w,u}$. 

It is easy to see that if $i_1\leq i_2\leq i_3\in\mathbb{N}_1$, $w[i_1,i_2]$, and $w[i_2+1,i_3]$ are such that $\vert\RunBorder_{w,u}(i_1,i_2)\vert\leq f_1\in\mathbb{N}_1$ and $\vert\RunBorder_{w,u}(i_2+1,i_3)\vert\leq f_2\in\mathbb{N}_1$ then $\vert\RunBorder_{w,u}(i_1,i_3)\vert\leq f_1+f_2+1$. It follows then from Lemma \ref{uj827xzv3} and Proposition \ref{ryy8hu8cxvxc} that if $(\delta_i,\delta_{i+1}-1)$ is a concatenation of at most $k$ words 
from $\Upsilon_{w,u}$ then $\vert\RunBorder_{w,u}(\delta_i,\delta_{i+1}-1)\vert\leq 3k$. 

Thus $(\delta_1,\delta_2,\dots,\delta_g)\in\StdPalFactrz_{w,u,k}(n_1,n_2)$.
This completes the proof.
\end{proof}

The next theorem presents an upper bound on the palindromic length of factors of the reduced words.
\begin{theorem}
\label{rug7r9unusedOK}
If $n_1\leq n_2\in\rpoDom_{w,u}$, $\overline n_1=\rpo_{w,u,\phi}(n_1)$, $\overline n_2=\rpo_{w,u,\phi}(n_2)$, and $k=1+\maxPL(w[n_1,n_2])$ then $\PL(\reduce_{w,u,\phi}[\overline n_1,\overline n_2])\leq 3k^3-3k^2$.
\end{theorem}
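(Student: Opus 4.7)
The plan is to combine the machinery built in the preceding sections: the standard palindromic factorization (Proposition~\ref{ue7dufyi89ie}), the fact that standard palindromes map to palindromes in the reduced word (Proposition~\ref{jt77gjri29k}), the run-count bound on palindromic length (Proposition~\ref{kkm3bvs56}), and the subadditivity of $\PL$ (Lemma~\ref{dujf720bxg}).

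First I would apply Proposition~\ref{ue7dufyi89ie} to obtain a tuple $(\delta_1,\dots,\delta_g)\in\StdPalFactrz_{w,u,k}(n_1,n_2)$ with $g\leq k$. By definition each $\delta_i$ and each $\delta_{i+1}-1$ lies in $\rpoDom_{w,u}$, so $\rpo_{w,u,\phi}(\delta_i)$ and $\rpo_{w,u,\phi}(\delta_{i+1}-1)$ are defined. Two consecutive integers that both lie in $\rpoDom_{w,u}$ necessarily sit inside the same $w_j$-block, since any $u$-run between them would occupy positive length; therefore $\rpo_{w,u,\phi}$ sends them to consecutive integers, and consequently the factors $\reduce_{w,u,\phi}[\rpo_{w,u,\phi}(\delta_i),\rpo_{w,u,\phi}(\delta_{i+1}-1)]$ concatenate exactly to $\reduce_{w,u,\phi}[\overline n_1,\overline n_2]$.

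Next I would bound the palindromic length of each of these $g-1$ pieces. If $(\delta_i,\delta_{i+1}-1)\in\StdPal_{w,u}$, Proposition~\ref{jt77gjri29k} says its image is a palindrome and contributes at most $1$. Otherwise the definition of $\StdPalFactrz_{w,u,k}$ guarantees $|\RunBorder_{w,u}(\delta_i,\delta_{i+1}-1)|\leq 3k$, so Proposition~\ref{kkm3bvs56}, together with $\maxPL(w[\delta_i,\delta_{i+1}-1])\leq\maxPL(w[n_1,n_2])=k-1$, bounds the palindromic length of the image by $(3k+1)(k-1)$.

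Finally, Lemma~\ref{dujf720bxg} and the fact that there are at most $g-1\leq k-1$ pieces yield
\[\PL\bigl(\reduce_{w,u,\phi}[\overline n_1,\overline n_2]\bigr)\leq (k-1)(3k+1)(k-1)=(k-1)^2(3k+1),\]
and the claimed bound $3k^3-3k^2$ follows from the identity $3k^3-3k^2-(k-1)^2(3k+1)=(k-1)(2k+1)\geq 0$. No single step is a serious obstacle; the main care needed is to verify that the images of the factorization pieces genuinely tile $\reduce_{w,u,\phi}[\overline n_1,\overline n_2]$ and that the arithmetic at the end gives the stated bound.
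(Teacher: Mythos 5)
Your proposal is correct and follows essentially the same route as the paper's own proof: extract a standard palindromic factorization via Proposition~\ref{ue7dufyi89ie}, bound each piece by $1$ (Proposition~\ref{jt77gjri29k}) or by $(3k+1)(k-1)$ (Proposition~\ref{kkm3bvs56}), and sum with Lemma~\ref{dujf720bxg}. The only difference is cosmetic arithmetic at the end — the paper rounds $(3k+1)(k-1)$ up to $3k^2$ before multiplying by $g-1\leq k-1$, whereas you keep $(k-1)^2(3k+1)$ and verify it is at most $3k^3-3k^2$; both are valid.
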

\begin{proof}
Proposition \ref{ue7dufyi89ie} implies that $\StdPalFactrz_{w,u,k}(n_1,n_2)\not=\emptyset$. 

Let $(\delta_1,\delta_2,\dots, \delta_g)\in\StdPalFactrz_{w,u,k}(n_1,n_2)$. We have that $g\leq k$. The definition of $\StdPalFactrz_{w,u,k}(n_1,n_2)$ asserts that for every $i\in\{1,2,\dots, g-1\}$ we have that $\delta_i, \delta_{i+1}-1\in\rpoDom_{w,u}$. 

Let $\overline \delta_i=\rpo_{w,u,\phi}(\delta_i)$, where $i\in \{1,2,\dots,g\}$.
It is easy to verify that if $j\in\mathbb{N}_1$ is such that $j-1,j\in \rpoDom_{w,u}$ then $\rpo_{w,u,\phi}(j-1)=\rpo_{w,u,\phi}(j)-1$.
It follows that if $i\in \{1,2,\dots,g\}$ then $\rpo_{w,u,\phi}(\delta_{i}-1)=\overline \delta_i-1$. In consequence we have that \[\overline w[\overline \delta_1, \overline \delta_g-1]=\overline w[\overline \delta_1, \overline \delta_2-1]\overline w[\overline \delta_2, \overline \delta_3-1]\cdots \overline w[\overline \delta_{g-1}, \overline \delta_g-1]\mbox{.}\]

The definition of $\StdPalFactrz_{w,u,k}$ says that  if $(\delta_i,\delta_{i+1}-1)\not \in\StdPal_{w,u}$ then \[\vert\RunBorder_{w,u}(\delta_i,\delta_{i+1}-1)\vert\leq 3k\mbox{.}\] Thus:
\begin{itemize}
\item
Proposition \ref{kkm3bvs56} implies that if $\vert\RunBorder_{w,u}(\delta_i,\delta_{i+1}-1)\vert\leq 3k$ and $\delta_i,\delta_{i+1}-1\in\rpoDom_{w,u}$ then \begin{equation}\begin{split}\label{ueyf7fujfy}\PL(\overline w[\overline \delta_i,\overline\delta_{i+1}-1])\leq (3k+1)\maxPL(w[\delta_1,\delta_g-1])=\\ (3k+1)(k-1)\leq 3k^2\mbox{.}\end{split}\end{equation}
\item
Proposition \ref{jt77gjri29k} implies that if $(\delta_i,\delta_{i+1}-1)\in\StdPal_{w,u}$ then $\PL(\overline w[\overline \delta_i,\overline\delta_{i+1}-1])=1$.
\end{itemize}

Then it follows from Lemma \ref{dujf720bxg} and (\ref{ueyf7fujfy}) that \[\begin{split}\PL(\overline w[\overline \delta_1, \overline \delta_g-1])\leq \sum_{i=1}^{g-1} \PL(\overline w[\overline \delta_i,\overline \delta_{i+1}-1])\leq \\ (g-1)(3k^2)\leq (k-1)(3k^2)= 3k^3-3k^2\mbox{.}\end{split}\]
This completes the proof.
\end{proof}

Now, we can step to the  proof of the main theorem of the current article.
\begin{proof}[Proof of Theorem \ref{d7uehgmn}]
Let $\gamma=3$. 

If $\PowFactor(u)\cap\Prefix(x)\cap\Alpha^{\gamma\vert u\vert}\not=\emptyset$, then let $t\in\Alpha^{+}$ and $w\in\Alpha^{\infty}$ be such that $x=tw$, $\Psi(x,u)\subseteq \Factor(w)$, $\maxPL(w)\leq \maxPL(x)$, and \[\PowFactor(u)\cap\Prefix(w)\cap\Alpha^{\gamma\vert u\vert}=\emptyset\mbox{.}\]  Since $x$ is non-ultimately periodic, such $t,w$ exist. (For example let $t\in\Prefix(x)$ be the shortest prefix such that $\PowFactor(u)\cap\Prefix(w)\cap\Alpha^{\gamma\vert u\vert}=\emptyset$.)  Then we have that $u\in\Pi_{\gamma}(w)$.

Let $h=3$, let $\phi\in\Phi_h$ be such that $\reduce_{w,u,\phi}$ is non-ultimately periodic and let $\overline w=\reduce_{w,u,\phi}$. Theorem \ref{fyur7t7tj} asserts that such $\phi$ exists.

From the definition of $\reduce_{w,u,\phi}$ it is clear that $\Psi(w,u)\subseteq\Factor(\overline w)$. 

Since $h=3$, Lemma \ref{tufdkd3455} implies that $u^5, (u^R)^5\not\in\Factor(\overline w)$. 

Let $n_1,n_2\in\mathbb{N}_1$.

If $\{n_1,n_1+2, \dots, n_2\}\cap\rpoDom_{\overline w,u}=\emptyset$ then obviously $\overline w[n_1,n_2]\in\Factor(z_i^{\phi(d_i)})$ for some $i\in\mathbb{N}_1$ and consequently $\PL(\overline w[n_1,n_2])\leq k$, because $z_i^{\phi(d_i)}\in\Factor(w)$.

If $\{n_1,n_1+2, \dots, n_2\}\cap\rpoDom_{\overline w,u}\not=\emptyset$ then 
let $\overline n_1=\min\{i\in\rpoDom_{\overline w,u}\mid i\geq n_1\}$, let $\overline n_2=\max\{i\in\rpoDom_{\overline w,u}\mid i\leq n\}$. Corollary \ref{dyf7ruryre} implies that there are  $m_1,m_2\in\rpoDom_{w,u}$ such that $\rpo_{w,u,\phi}(m_1)=\overline n_1$ and $\rpo_{w,u,\phi}(m_2)=\overline n_2$. 
Theorem \ref{rug7r9unusedOK} implies that $\PL(w[\overline n_1,\overline n_2])\leq 3k^3-3k^2$, since $\maxPL(w)\leq k$.

If $n_1<\overline n_1$ then obviously $\overline w[n_1,\overline n_1]\in\Factor(z_i^{\phi(d_i)})$ for some $i\in\mathbb{N}_1$. It follows that $\overline w[n_1,\overline n_1]\in\Factor(w)$ and consequently $\PL(\overline w[n_1,\overline n_1])\leq k$.
Analogously for $\overline n_2<n_2$. 
It follows from Lemma \ref{dujf720bxg} that \[\begin{split}\PL(\overline w[n_1,n_2])\leq  \PL(\overline w[n_1,\overline n_1-1])+\PL(\overline w[\overline n_1,\overline n_2])+\PL(\overline w[\overline n_2+1,n_1]) \\ \leq  2k+3k^3-3k^2\leq 3k^3\mbox{.}\end{split}\]

This completes the proof.
\end{proof}

\section*{Acknowledgments}
This work was supported by the Grant Agency of the Czech Technical University in Prague, grant No. SGS20/183/OHK4/3T/14.

\bibliographystyle{siam}
\IfFileExists{biblio.bib}{\bibliography{biblio}}{\bibliography{../!bibliography/biblio}}

\end{document}